\theoremstyle{plain}
\date{\today}
\title{Multifractal spectra of typical and prevalent measures}
\author{Fr\'ed\'eric Bayart}
\address{
Clermont Universit\'e, Universit\'e Blaise Pascal, Laboratoire de Math\'ematiques, BP 10448, F-63000 CLERMONT-FERRAND -
CNRS, UMR 6620, Laboratoire de Math\'ematiques, F-63177 AUBIERE
}
\email{Frederic.Bayart@math.univ-bpclermont.fr}
\subjclass{}
\keywords{}
\newcommand{\veps}{\varepsilon}
\def\RR{\mathbb R}
\def\NN{\mathbb N}
\def\P{\mathcal P}
\def\M{\mathcal M}
\def\H{\mathcal H}
\def\E{\mathcal E}
\def\R{\mathcal R}
\def\Ms{\mathcal M^s}
\def\dimh{\dim_{\mathcal H}}
\def\dimp{\dim_{\mathcal P}}
\def\dinfloc{\underline{\dim}_{\rm loc}}
\def\dsuploc{\overline{\dim}_{\rm loc}}
\def\dinflocmux{\underline{\dim}_{\rm loc}(\mu;x)}
\def\dsuplocmux{\overline{\dim}_{\rm loc}(\mu;x)}
\def\dlocmux{\dim_{\rm loc}(\mu;x)}
\def\dinfmu{\underline{D}_\mu}
\def\dinfmuq{\underline{D}_\mu(q)}
\def\Emoinsmualpha{\E_-(\mu;\alpha)}
\def\Eplusmualpha{\E^+(\mu;\alpha)}
\def\emoinsmualpha{E_-(\mu;\alpha)}
\def\eplusmualpha{E^+(\mu;\alpha)}
\def\emualpha{E(\mu;\alpha)}
\def\pk{\mathcal P(K)}
\def\fk{\mathcal F(K)}
\def\dboxsup{\overline{\dim}_B}
\def\dboxsuploc{\overline{\dim}_{B,{\rm loc}}}
\def\Ulm{\mathcal U_{l,m}}
\def\bfi{{\mathbf i}}
\def\bfp{{\mathbf p}}
\DeclareMathOperator{\supp}{supp}
\newtheorem{theorem}{Theorem}[section]
\newtheorem{lemma}[theorem]{Lemma}
\newtheorem{proposition}[theorem]{Proposition}
\newtheorem{corollary}[theorem]{Corollary}
\theoremstyle{definition}}
\theoremstyle{definition}}
\theoremstyle{definition}}
\theoremstyle{definition}}
\theoremstyle{definition}}
\theoremstyle{definition}\newtheorem{remark}[theorem]{Remark}}
\newtheorem*{BS}{Theorem (Buczolich-Seuret)}
\begin{document}

\begin{abstract}
We prove that, in the Baire category sense,  a typical measure supported by a compact set 
admits a linear lower singularity spectrum. We investigate the same question for the upper singularity spectrum
and for other forms of genericity.
\end{abstract}

\maketitle

\section{Introduction}
The \emph{lower} and \emph{upper local dimension} of $\mu$ at $x$ are defined by 
\begin{eqnarray*}
\dinflocmux&=&\liminf_{r\to 0}\frac{\log\mu\big(B(x,r)\big)}{\log r}\\
\dsuplocmux&=&\limsup_{r\to 0}\frac{\log\mu\big(B(x,r)\big)}{\log r}.
\end{eqnarray*}
When these two dimensions coincide, we say that $\mu$ admits a local dimension at $x$ and we denote this common value
$\dlocmux$.

The study of pointwise dimension maps and their relationship to Hausdorff dimension goes back to Bilingsley (\cite{Bil60}, \cite{Bil61}). Setting 
\begin{eqnarray*}
\Emoinsmualpha&=&\{x\in K;\ \dinflocmux\leq\alpha\}\\
\Eplusmualpha&=&\{x\in K;\ \dsuplocmux\leq\alpha\},
\end{eqnarray*}
it is well known that 
\begin{eqnarray*}
\dim_\H(\Emoinsmualpha)&\leq&\alpha\quad\quad\textrm{ for any }\alpha\in[0,\dimh(K)]\\
\dimp(\Eplusmualpha)&\leq&\alpha\quad\quad\textrm{ for any }\alpha\in[0,\dimp(K)],
\end{eqnarray*}
where $\dimh(E)$ (resp. $\dimp(E)$) denotes the Hausdorff (resp. the packing) dimension of $E$. By appropriate versions of the Frostman lemma, 
one may easily prove that these inequalities are optimal (we refer to the classical books \cite{Fal97,Mat95} for the usual definitions and results which are involved in this paper).

To provide a more in depth study, we can carry out a multifractal analysis of the measure $\mu$.
 We then introduce the level sets of the local dimension maps by setting 
\begin{eqnarray*}
\emoinsmualpha&=&\{x\in K;\ \dinflocmux=\alpha\}\\
\eplusmualpha&=&\{x\in K;\ \dsuplocmux=\alpha\}\\
\emualpha&=&\{x\in K;\ \dlocmux=\alpha\}.
\end{eqnarray*}
These sets describe the distribution of the singularities of the measure $\mu$ and thus contain crucial informations on the geometrical properties
of $\mu$. The maps $\alpha\mapsto \dimh\big(\emoinsmualpha\big)$ and $\alpha\mapsto \dimp\big(\eplusmualpha\big)$ are called the \emph{lower} and the \emph{upper
singularity spectrum} of $\mu$. A measure for which $\dimh\big(\emoinsmualpha\big)$ or $\dimp\big(\eplusmualpha\big)$ is nonzero for several values of $\alpha$ is called \emph{multifractal}. 

\smallskip

In this paper, we are looking for measures $\mu$ having the worst possible behaviour: for any $\alpha\in[0,\dimh(K)]$, $\dimh\big(\emoinsmualpha\big)=\alpha$ or, for any $\alpha\in[0,\dimp(K)]$, $\dimp\big(\eplusmualpha\big)=\alpha.$
Can we find such a measure on any compact set $K$? Does a typical measure on $K$ satisfy this property? By a property true for a typical measure
of $\pk$, we mean a property which is satisfied by a dense $G_\delta$ set of elements of $\pk$. Throughout this paper, $\pk$ is endowed with the weak topology.

In this direction, a recent nice achievement was obtained by Z. Buczolich and S. Seuret; they show in \cite{BuSe10} that this is true if $K=[0,1]^d$.
\begin{BS}
 A typical measure $\mu\in\mathcal P([0,1]^d)$ satisfies, for any $\alpha\in[0,d]$, $\dimh\big(\emoinsmualpha\big)=\alpha$.
\end{BS}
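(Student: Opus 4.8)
The upper bound is automatic: since $\emoinsmualpha\subseteq\Emoinsmualpha$, the Billingsley-type inequalities recalled above give $\dimh(\emoinsmualpha)\le\alpha$ for \emph{every} $\mu\in\mathcal P([0,1]^d)$ and every $\alpha\in[0,d]$. Thus the entire content of the theorem is the reverse inequality for a typical $\mu$, and it suffices to exhibit a dense $G_\delta$ set of measures with $\dimh(\emoinsmualpha)\ge\alpha$ for all $\alpha\in[0,d]$ simultaneously. It is worth noting that one cannot be content with producing points of merely \emph{small} lower local dimension: a set of dimension $\alpha$ contained in $\Emoinsmualpha$ only forces $\dimh(\Emoinsmualpha)=\alpha$, and a countable-stability estimate for $\dimh(\Emoinsmualpha\setminus\emoinsmualpha)$ fails to be strict. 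Consequently the construction must manufacture, for each $\alpha$, points whose lower local dimension equals $\alpha$ exactly, i.e. points $x$ with $\mu\big(B(x,r)\big)$ comparable to $r^\alpha$ at all small scales.

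The natural building block is, inside a dyadic cube $Q$ of small side, an exact-dimensional self-similar (Bernoulli) measure carried by a Cantor set of Hausdorff dimension $\alpha$: at each subdivision one keeps roughly $2^{\alpha\ell}$ of the $2^{d\ell}$ subcubes of relative side $2^{-\ell}$ and spreads the mass $\nu(Q)$ uniformly over them. On the resulting Cantor set $C_\alpha\subseteq Q$ one has $\mu\big(B(x,r)\big)\asymp\nu(Q)\,r^\alpha$ and $\dimh(C_\alpha)=\alpha$, so that $C_\alpha$ lies locally in $\emualpha\subseteq\emoinsmualpha$. The density lemma I would prove then reads: for every $\nu\in\mathcal P([0,1]^d)$, every $\veps>0$, every dyadic $\alpha$ and every $n$, one can insert such a block at a sufficiently small scale and on a small fraction of the mass so that the perturbed measure $\mu$ stays within $\veps$ of $\nu$ for the weak topology while containing a finite, $2^{-N}$-separated family of balls with $\mu\big(B(x,r)\big)>r^{\alpha-1/n}$ whose cardinality is at least $r^{-(\alpha-1/n)}$.

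I would then define, for each dyadic $\alpha\in[0,d]\cap\QQ$ and each $n\in\NN$, the set $\mathcal O_{\alpha,n}$ of measures possessing such a family at some scale $r<1/n$. Because the mass of an open ball is lower semicontinuous for the weak topology and the defining inequalities are strict, each $\mathcal O_{\alpha,n}$ is open, and the density lemma shows it is dense; the residual set is taken to be $\bigcap_{\alpha,n}\mathcal O_{\alpha,n}$. For $\mu$ in this set and for $\alpha$ dyadic one recovers, at infinitely many scales, nested block structures; arranging them in a decreasing sequence of cubes yields a Cantor set $S_\alpha$ with $\dimh(S_\alpha)=\alpha$ and $\dinflocmux=\alpha$ for $x\in S_\alpha$. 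For an arbitrary $\alpha\in[0,d]$ one chooses dyadics $\beta_k\to\alpha$ and threads blocks with targets $\beta_k$ through a nested sequence of ever smaller cubes, so that a point lying in the $k$-th block has its running average of log-mass ratios converging to $\lim_k\beta_k=\alpha$; the resulting mixed-scale Cantor set again has lower local dimension $\alpha$ and, by a Frostman mass-distribution estimate, Hausdorff dimension $\alpha$.

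The main obstacle is the interaction between the successive perturbations. One must check that the blocks inserted at later, finer stages of the genericity construction carry so little mass and live at such small scales that, around each $x\in S_\alpha$, they neither inflate $\mu\big(B(x,r)\big)$ above $\asymp r^\alpha$ — which would pull the liminf below $\alpha$ — nor obstruct the lower estimate needed for $\dinflocmux\ge\alpha$; simultaneously the block of target $\alpha$ must keep producing, infinitely often, scales where $\mu\big(B(x,r)\big)$ is as large as $r^{\alpha}$, to keep the liminf from exceeding $\alpha$. Reconciling these two-sided ball estimates uniformly over all $\alpha\in[0,d]$, and extracting the genuine Hausdorff lower bound $\dimh(S_\alpha)\ge\alpha$ — rather than mere Hausdorff content or box dimension — from the self-similar block via a Frostman argument, is where the real work lies.
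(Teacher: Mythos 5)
Your opening move --- declaring that ``one cannot be content with producing points of merely small lower local dimension'' --- is where the proposal goes wrong, and it sends you down a path that cannot be completed. The countable-stability objection you raise is valid at the level of Hausdorff \emph{dimension} but not at the level of Hausdorff \emph{measure}, and the latter is exactly how the paper (following Buczolich--Seuret) closes the gap. One produces a compact set $E_\alpha\subset K$ with $0<\H^\alpha(E_\alpha)<+\infty$ and shows that for a typical $\mu$ one has $E_\alpha\subset\Emoinsmualpha$; this only requires the one-sided, weakly stable estimate $\mu\big(B(y,4r)\big)\geq 4^\alpha r^\alpha$ at a single scale per generation, which survives perturbation by Lemma \ref{LEMTOPO1}. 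The defective part $\{x\in E_\alpha;\ \dinflocmux<\alpha\}$ is then contained in $\bigcup_{\beta\in\QQ,\ \beta<\alpha}\E_-(\mu;\beta)$, and each $\E_-(\mu;\beta)$ has Hausdorff dimension at most $\beta<\alpha$, hence $\H^\alpha$-measure zero; so $E_\alpha\cap\emoinsmualpha$ still has positive $\H^\alpha$-measure and therefore dimension exactly $\alpha$. No two-sided ball estimate is ever needed.

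By contrast, the route you propose requires, for the points of your set $S_\alpha$, the upper bound $\mu\big(B(x,r)\big)\leq r^{\alpha-\veps}$ for \emph{all} sufficiently small $r$ (this is what $\dinflocmux\geq\alpha$ means), and in the strong form you state (``comparable to $r^\alpha$ at all small scales'') it even forces $\dsuplocmux=\alpha$. Neither condition is encoded in your open sets $\mathcal O_{\alpha,n}$, which only record lower bounds on ball masses: an upper bound on the mass of every small ball is not an open condition for the weak topology (the mass of an open set is only lower semicontinuous) and is not inherited through the later stages of the $G_\delta$ intersection, where $\mu$ is merely close to some unspecified $\nu$. Worse, the strong form is incompatible with typicality: since $[0,1]^d$ is Ahlfors regular with $\dboxsuploc([0,1]^d)=d$, Theorem \ref{THMPACKING1} shows that a typical $\mu$ satisfies $\dsuplocmux\geq d$ for every $x$, so for $\alpha<d$ a typical measure has no point at which $\mu\big(B(x,r)\big)\asymp r^\alpha$ at all scales; your residual set would have to meet the complement of a residual set, which is impossible. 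The paragraph you label ``the main obstacle'' is therefore not a technical loose end but the heart of the matter, and the fix is not to resolve it but to abandon the two-sided requirement in favour of the measure-theoretic one-sided argument above.
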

Our first main result in this paper is the generalization of this last theorem to an arbitrary compact subset of $\mathbb R^d$.
\begin{theorem}\label{THMHAUSDORFF}
 Let $K$ be a compact subset of $\mathbb R^d$. A typical measure $\mu\in\pk$ satisfies, for any $\alpha\in\big[0,\dimh(K)\big)$, 
$\dimh\big(\emoinsmualpha\big)=\alpha$. If $\mathcal H^\alpha(K)>0$, this is also true for $\alpha=\dimh(K)$.
\end{theorem}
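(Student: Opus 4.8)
The plan is to split the statement into its free half and its substantial half. For every $\mu\in\pk$ one has $\emoinsmualpha\subseteq\Emoinsmualpha$, so the inequality $\dimh(\Emoinsmualpha)\le\alpha$ recalled in the introduction gives $\dimh(\emoinsmualpha)\le\alpha$ automatically. Hence the whole difficulty lies in the reverse inequality $\dimh(\emoinsmualpha)\ge\alpha$, to be established for a typical $\mu$ and for every admissible $\alpha$ at once. Since ``for every $\alpha$'' is an uncountable family of requirements whereas a residual set must be built from countably many open dense sets, I would first fix a countable set of parameters---rational exponents $\alpha$, together with integer scale and accuracy indices---and prove that a single countable intersection of open dense sets forces, for each real $\alpha\in[0,\dimh(K))$ (and for $\alpha=\dimh(K)$ when $\H^\alpha(K)>0$), the existence of a Cantor-type subset $F_\alpha\subseteq K$ with $\dimh(F_\alpha)\ge\alpha-\eta$ for all $\eta>0$ and with $\dinflocmux=\alpha$ for every $x\in F_\alpha$; monotonicity in $\alpha$ then transfers the conclusion from rationals to all reals.

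Second, I would isolate the building block that replaces the single feature of the cube used by Buczolich--Seuret, namely its uniform measure and dyadic grid. For each $s<\dimh(K)$ one has $\H^s(K)=+\infty>0$, so Frostman's lemma provides a probability measure $\nu_s$ carried by $K$ with $\nu_s(B(x,r))\le C r^s$ for all $x,r$, whose support has Hausdorff dimension at least $s$; when $\H^{\dimh(K)}(K)>0$ the same holds at the top exponent. This $\nu_s$ plays the role of normalized Lebesgue measure. The elementary perturbation is then: inside a prescribed small ball $B$ and for a target exponent $\alpha\le s$, redistribute the $\nu_s$-mass of $B$ unevenly over a finite net of sub-balls so that, on a chosen sub-collection, the mass of each sub-ball of radius $r$ is comparable to $r^{\alpha}$, while keeping everywhere the one-sided control inherited from $\nu_s$. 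Iterating this block inside itself produces the nested cylinders of a Cantor set realizing the exponent $\alpha$.

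Third comes the Baire machinery. I would check that measures obtained by gluing finitely many such rescaled blocks are weakly dense in $\pk$: the weak topology only constrains finitely many continuous test functions up to a fixed precision, so any modification confined to balls of small enough radius leaves $\mu$ weakly close to its target. For fixed parameters the required estimates are finitely many strict inequalities of the form $\mu(B)>c$ and $\mu(B)<c$, which define an open set in the weak topology once one passes from balls to integrals of continuous bump functions (the usual, mildly delicate, step of comparing $\mu(B)$ with $\int\varphi\,d\mu$ for $\varphi$ squeezed between indicators of concentric balls). Each such $\O_{k}$ being open and dense, a typical $\mu$ lies in $\bigcap_k\O_k$.

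Finally, for $\mu\in\bigcap_k\O_k$ and a fixed $\alpha$, the conditions furnish nested families of good sub-balls at infinitely many scales $r_j\to0$; their intersection is the Cantor set $F_\alpha$. On $F_\alpha$ I would verify the two sides of $\dinflocmux=\alpha$ separately: the eventual upper bound $\mu(B(x,r))\le r^{\alpha-\veps}$, giving $\dinflocmux\ge\alpha$, comes from the Frostman control of $\nu_s$ with $s>\alpha$, while the infinitely-often lower bound $\mu(B(x,r_j))\ge r_j^{\alpha+\veps}$, giving $\dinflocmux\le\alpha$, comes from the lumps placed at the scales $r_j$. A mass-distribution estimate on $\mu|_{F_\alpha}$ yields $\dimh(F_\alpha)\ge\alpha-\eta$, and with the free upper bound the equality $\dimh(\emoinsmualpha)=\alpha$ follows. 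I expect the genuine obstacle to be the building block and its iteration: one must reconcile, on one and the same measure and uniformly over all $\alpha$, a global upper Frostman bound (needed so that the liminf is not smaller than $\alpha$) with engineered thickness along a deliberately sparse sequence of scales (needed so that the liminf is not larger than $\alpha$), and carry this out on an arbitrary compact $K$ where the only regularity at hand is the one-sided bound coming from $\nu_s$---in particular the relative speed $r_{j+1}/r_j$ of the scales must be tuned so that the excess mass does not contaminate intermediate radii and depress the liminf below $\alpha$.
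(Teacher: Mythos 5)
Your reduction to the lower bound $\dimh(\emoinsmualpha)\geq\alpha$ is correct, and your open/dense machinery for forcing mass lumps at a sequence of scales (giving $\dinflocmux\leq\alpha$ on a prescribed set) is essentially the paper's Proposition \ref{PROPHAUSDORFF}. The genuine gap is in the other half of the level-set condition. You propose to secure $\dinflocmux\geq\alpha$ on the Cantor set $F_\alpha$ by making the typical measure inherit an eventual Frostman bound $\mu(B(x,r))\leq r^{\alpha-\veps}$ for \emph{all} small $r$. That is a ``for all scales'' requirement, and it cannot be realized as a countable intersection of open \emph{dense} sets of the kind you describe: each $\O_k$ can only impose finitely many inequalities $\mu(B)<c$, and to keep $\O_k$ dense the scale at which these hold must be allowed to depend on the measure being approximated (a union over scales $r<1/k$). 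A measure in $\bigcap_k\O_k$ therefore only satisfies the upper mass bounds along a sequence of scales that it chooses itself, with uncontrolled gaps in between where mass can pile up on $F_\alpha$ and drag $\dinflocmux$ strictly below $\alpha$. You flag exactly this danger (``excess mass contaminating intermediate radii''), but the tuning of $r_{j+1}/r_j$ you hope for is not available to the prover in a Baire argument; indeed Genyuk's theorem, cited in the paper, shows a typical measure has $\dinflocmux=0$ outside a first category set, so no global Frostman control survives genericity. This is where your proof would fail.

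The paper avoids the problem entirely: it never imposes any upper bound on ball masses generically. Instead it uses the inequality $\dimh\big(\E_-(\mu;\beta)\big)\leq\beta$, valid for \emph{every} $\mu\in\pk$, as follows. One first constructs (Theorem \ref{THMBESICO}, a nontrivial refinement of Besicovitch's construction via net measures) an \emph{increasing} family of compacta $E_\alpha\subset K$ with $0<\H^\alpha(E_\alpha)<+\infty$; then Proposition \ref{PROPHAUSDORFF}, applied along a dense sequence of exponents and combined with the nesting, forces $E_\alpha\subset\Emoinsmualpha$ for every $\alpha$ simultaneously. The exceptional set $\{x\in E_\alpha;\ \dinflocmux<\alpha\}$ is contained in $\bigcup_{\beta<\alpha,\ \beta\in\QQ}\E_-(\mu;\beta)$, hence has $\H^\alpha$-measure zero for free; since $\H^\alpha(E_\alpha)>0$, the set where $\dinflocmux=\alpha$ exactly retains positive $\H^\alpha$-measure and thus dimension $\alpha$. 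Note also that your appeal to ``monotonicity in $\alpha$'' to pass from rational to arbitrary $\alpha$ needs the nested family $(E_\alpha)$ with positive $\H^\alpha$-measure, not just a Frostman measure $\nu_s$ for each $s$: the level sets $\emoinsmualpha$ themselves are not monotone in $\alpha$, so irrational exponents are not reached without this structure.
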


We now turn to the upper local dimension. In that case, the situation breaks down dramatically: for a typical measure $\mu\in\pk$, the sets $\Eplusmualpha$
are empty provided $\alpha$ is sufficiently small. This depends on the local upper box dimension of $K$. For $E$ a subset of $\mathbb R^d$, we denote by 
$\dboxsup(E)$ the upper-box dimension of $E$. The \emph{local upper box dimension} of a compact subset $K$ of $\mathbb R^d$ is defined by
$$\dboxsuploc(K)=\inf_{\substack{x\in K\\ \rho>0}}\dboxsup\big(K\cap B(x,\rho)\big).$$

\begin{theorem}\label{THMPACKING1}
 Let $K$ be a compact subset of $\mathbb R^d$. A typical measure $\mu\in\pk$ satisfies: for any $x\in K$, $\dsuplocmux\geq\dboxsuploc(K)$. 
\end{theorem}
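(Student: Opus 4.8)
The plan is to fix $s=\dboxsuploc(K)$ and run a Baire category argument. If $s=0$ there is nothing to prove, since $\mu(B(x,r))\le 1$ forces $\frac{\log\mu(B(x,r))}{\log r}\ge 0$ for every $r<1$, whence $\dsuplocmux\ge 0$ for all $\mu$ and all $x$. So assume $s>0$. For integers $n,N\ge 1$ I would set
\[
\O_{n,N}=\Big\{\mu\in\pk:\ \forall x\in K,\ \exists\, r\in(0,1/N)\text{ with }\mu\big(\overline{B(x,r)}\big)<r^{\,s-1/n}\Big\}.
\]
Since $\mu(B(x,r))\le\mu(\overline{B(x,r)})$, any $\mu\in\bigcap_{n,N}\O_{n,N}$ admits, for each $x$ and each $n$, radii $r\to 0$ along which $\frac{\log\mu(B(x,r))}{\log r}>s-1/n$, hence $\dsuplocmux\ge s$. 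Thus it suffices to show each $\O_{n,N}$ is open and dense and to invoke Baire's theorem, as $\bigcap_{n,N}\O_{n,N}$ is then a dense $G_\delta$.

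\emph{Openness.} Under weak convergence $\mu\mapsto\mu(\overline B)$ is upper semicontinuous for every closed ball $\overline B$, and $r\mapsto\mu(\overline{B(x,r)})$ is right-continuous. Given $\mu\in\O_{n,N}$ and $x\in K$, I would pick $r_x<1/N$ realizing the defining inequality and, by right-continuity, $\sigma_x>0$ with $r_x+\sigma_x<1/N$ and $\mu(\overline{B(x,r_x+\sigma_x)})<r_x^{\,s-1/n}$. For $y\in B(x,\sigma_x)$ one has $\overline{B(y,r_x)}\subseteq\overline{B(x,r_x+\sigma_x)}$, so any $\nu$ with $\nu(\overline{B(x,r_x+\sigma_x)})<r_x^{\,s-1/n}$ witnesses membership at every such $y$. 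Covering $K$ by finitely many $B(x_j,\sigma_{x_j})$, the finite intersection of the (open) sets $\{\nu:\nu(\overline{B(x_j,r_{x_j}+\sigma_{x_j})})<r_{x_j}^{\,s-1/n}\}$ is an open neighbourhood of $\mu$ inside $\O_{n,N}$.

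\emph{Density.} This is where the hypothesis on $K$ enters. Because $s>0$ and $\dboxsup(K\cap B(x,\rho))\ge s>0$ for all $x,\rho$, the set $K$ is perfect, and so is every relatively open piece $K\cap B(a,\eta)$ (its closure is a nonempty perfect compact set, hence carries an atomless probability measure). Fixing $\mu_0\in\pk$, finitely many test functions with common modulus $\omega$, and a tolerance $\tau$, I would choose a grid of mesh $\eta_0$ (with $2\omega(\eta_0\sqrt d)<\tau$) with generic offset so that $\mu_0$ charges no grid hyperplane; let $Q_i$ be the finitely many cubes meeting $K$, $K_i=K\cap Q_i$, $p_i=\mu_0(K_i)$ (so $\sum_i p_i=1$). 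For each $i$ with $p_i>0$ pick $z_i\in K_i\cap\mathrm{int}(Q_i)$ and $\rho_i>0$ with $\overline{B(z_i,2\rho_i)}\subseteq\mathrm{int}(Q_i)$; then $F_i:=K\cap\overline{B(z_i,\rho_i)}$ satisfies $\dboxsup(F_i)\ge s$.

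Now fix $\epsilon\in(0,1/n)$ and $\beta_0=\min_{i\ne j}\mathrm{dist}(\overline{B(z_i,2\rho_i)},\overline{B(z_j,2\rho_j)})>0$. Using $\dboxsup(F_i)\ge s$, for each $i$ I would select a scale $\delta_i$ as small as needed (below $\min(\beta_0,1/N,4\rho_i)$) together with a $\delta_i$-separated family $a_1,\dots,a_{M_i}\in F_i$ with $M_i\ge\delta_i^{-(s-\epsilon)}$, place an atomless probability measure $\theta_k$ on $K\cap\overline{B(a_k,\delta_i/4)}$, and set $\nu_i=\frac1{M_i}\sum_k\theta_k$, $\mu=\sum_i p_i\nu_i$. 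As $\supp\nu_i\subseteq\overline{B(z_i,2\rho_i)}\subseteq Q_i$ with $z_i\in K_i$, matching masses gives $|\int f\,d\mu-\int f\,d\mu_0|\le 2\omega(\eta_0\sqrt d)<\tau$, so $\mu$ is weakly close to $\mu_0$; moreover $\mu$ is atomless with supports in disjoint cubes, hence pairwise $\ge\beta_0$ apart. For $x\in K$: if $x\notin\supp\mu$, a small ball has measure $0$; otherwise $x\in\supp\nu_{i^*}$, and for $r=\delta_{i^*}<\beta_0$ the ball $\overline{B(x,\delta_{i^*})}$ meets only $\supp\nu_{i^*}$ and at most a dimensional constant $\kappa_d$ of the $\delta_{i^*}$-separated balls, so $\mu(\overline{B(x,\delta_{i^*})})\le\kappa_d/M_{i^*}\le\kappa_d\,\delta_{i^*}^{\,s-\epsilon}<\delta_{i^*}^{\,s-1/n}$ once $\delta_{i^*}$ is small. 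Thus $\mu\in\O_{n,N}$. I expect the main obstacle to be precisely this density step: reconciling the fact that the local box dimension controls only the \emph{entire} local set $K\cap B(x,\rho)$ with the need to partition $K$ in order to match $\mu_0$'s masses (resolved by spreading over interior balls $F_i$ in generic grid cubes), and the need to avoid atoms everywhere (resolved by perfectness, which $s>0$ guarantees), while keeping the witnessing scales localized by separating the supports.
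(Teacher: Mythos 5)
Your proposal is correct and follows essentially the same route as the paper: a Baire category argument with open dense sets of measures admitting, uniformly in $x\in K$, a witnessing radius below a threshold at which the ball has measure below $r^{s-\veps}$, with density obtained by redistributing mass over roughly $\delta^{-s}$ many $\delta$-separated points supplied by the local upper box dimension. The differences are only in execution: you use closed balls and Portmanteau upper semicontinuity where the paper uses its quantitative Lemma \ref{LEMTOPO1} together with a radius window $(1/m,1/l)$, and you approximate arbitrary measures via a grid decomposition with atomless spreads where the paper simply approximates finitely supported measures by finite uniform atomic ones.
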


This theorem has to be compared with a result of H. Haase. In \cite{Ha92}, he shows that if $x$ is a non-isolated point of $K$, then a typical measure
$\mu\in\pk$ satisfies $\dsuplocmux=+\infty$. Our Theorem \ref{THMPACKING1} is a kind of uniform version of Haase's result. We may also compare this result 
with a theorem of Genyuk, see \cite{Gen97}. She shows that, for a typical measure $\mu\in\pk$, $\dinflocmux=0$ and $\dsuplocmux=+\infty$
except on a set of first category. In particular, for a typical measure, the local dimension does typically not exist.

\smallskip

A Baire Category theorem shows that $\dboxsuploc(K)\leq\dimp(K)$ (see \cite[Lemma 10.18]{MP10}). The most interesting case of Theorem \ref{THMPACKING1}
happens when we have equality. This holds for many compact sets, like Ahlfors regular compact sets. Following G. David and S. Semmes \cite{DS97}, a
compact set $K\subset\mathbb R^d$ is called \emph{Ahlfors regular} if there are positive constants $s,c_1,c_2,r_0>0$ such that $\dimh(K)=s$
and 
$$c_1r^s\leq \mathcal H^s\big(K\cap B(x,r)\big)\leq c_2 r^s$$
for all $x\in K$ and all $0<r<r_0$, where $\mathcal H^s$ denotes the $s$-dimensional Hausdorff measure. Many compact sets are Ahlfors regular 
(see \cite{DS97}). For instance this is the case of self-similar compact sets satisfying the Open Set Condition. Ahlfors regularity easily implies that the local upper box dimension is equal to the packing dimension. Thus we get the following corollary.

\begin{corollary}\label{CORPACKING1}
 Let $K$ be a compact subset of $\mathbb R^d$ which is Ahlfors regular. Then a typical measure $\mu\in\pk$ satisfies: for any $x\in K$, $\dsuplocmux\geq\dimp(K)$. 
\end{corollary}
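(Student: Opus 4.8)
The plan is to derive Corollary~\ref{CORPACKING1} directly from Theorem~\ref{THMPACKING1} by checking that Ahlfors regularity forces the equality $\dboxsuploc(K)=\dimp(K)$. Write $s$ for the regularity exponent, so that $\dimh(K)=s$ and $c_1r^s\leq\H^s\big(K\cap B(x,r)\big)\leq c_2r^s$ for all $x\in K$ and $0<r<r_0$. I would first show $\dboxsuploc(K)=s$ and then $\dimp(K)=s$; combined with Theorem~\ref{THMPACKING1}, which guarantees that a typical $\mu\in\pk$ satisfies $\dsuplocmux\geq\dboxsuploc(K)$ for every $x\in K$, this yields $\dsuplocmux\geq\dboxsuploc(K)=\dimp(K)$ for every $x\in K$, as required.

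To compute the upper box dimension of a local piece, fix $x\in K$ and $0<\rho<r_0/2$, and for $0<\delta<\rho$ let $\{x_1,\dots,x_N\}$ be a maximal $\delta$-separated subset of $K\cap B(x,\rho)$, so that the balls $B(x_i,\delta)$ cover $K\cap B(x,\rho)$ while the balls $B(x_i,\delta/2)$ are pairwise disjoint and contained in $B(x,2\rho)$. The lower Ahlfors estimate applied at each centre $x_i\in K$ together with disjointness gives
\[
N\,c_1(\delta/2)^s\leq\sum_{i=1}^N\H^s\big(K\cap B(x_i,\delta/2)\big)\leq\H^s\big(K\cap B(x,2\rho)\big)\leq c_2(2\rho)^s,
\]
so $N=O(\delta^{-s})$ and hence $\dboxsup\big(K\cap B(x,\rho)\big)\leq s$. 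Conversely, since the balls $B(x_i,\delta)$ cover $K\cap B(x,\rho)$, the upper Ahlfors estimate gives
\[
c_1\rho^s\leq\H^s\big(K\cap B(x,\rho)\big)\leq\sum_{i=1}^N\H^s\big(K\cap B(x_i,\delta)\big)\leq N\,c_2\delta^s,
\]
so $N\geq(c_1/c_2)\rho^s\delta^{-s}$ and therefore $\dboxsup\big(K\cap B(x,\rho)\big)\geq s$. Thus $\dboxsup\big(K\cap B(x,\rho)\big)=s$ for every $x\in K$ and every $0<\rho<r_0/2$, and taking the infimum over $x\in K$ and $\rho>0$ yields $\dboxsuploc(K)=s$.

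It then remains to identify $\dimp(K)$. The general inequalities $\dimh(E)\leq\dimp(E)\leq\dboxsup(E)$ hold for every bounded set $E$; since $\dimh(K)=s$, while covering $K$ by finitely many pieces $K\cap B(x,\rho)$ (each of upper box dimension $s$ by the computation above) gives $\dboxsup(K)=s$, we conclude $\dimp(K)=s=\dboxsuploc(K)$, completing the deduction.

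I expect no serious obstacle here: all the depth of the result is carried by Theorem~\ref{THMPACKING1}, and the corollary reduces to the elementary two-sided box-counting estimate above. The only point demanding a little care is that this estimate must hold \emph{uniformly} over the local pieces $K\cap B(x,\rho)$ with centre $x\in K$, which is exactly what Ahlfors regularity supplies, since the crucial lower mass bound is available precisely at centres lying in $K$.
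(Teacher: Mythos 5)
Your proposal is correct and follows exactly the route the paper takes: the paper deduces the corollary from Theorem~\ref{THMPACKING1} by asserting that Ahlfors regularity ``easily implies'' $\dboxsuploc(K)=\dimp(K)$, and your two-sided counting estimate with a maximal $\delta$-separated set, together with the chain $\dimh(K)\leq\dimp(K)\leq\dboxsup(K)$, is precisely the verification being left to the reader. No gaps.
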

Since $\dsuplocmux\leq\dimp(K)$ for $\mu$-almost all $x\in K$ (and all $\mu\in\pk$), this corollary yields in turn:
\begin{corollary}
 Let $K$ be a compact subset of $\mathbb R^d$ which is Ahlfors regular. Then a typical measure $\mu\in\pk$ satisfies: for $\mu$-almost all $x\in K$, 
$\dsuplocmux=\dimp(K)$. 
\end{corollary}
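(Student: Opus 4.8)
The plan is to read off this corollary from Corollary \ref{CORPACKING1} by intersecting its pointwise lower bound with the universal upper bound recalled just above the statement. Concretely, Corollary \ref{CORPACKING1} provides a dense $G_\delta$ subset $\G$ of $\pk$ consisting of those measures $\mu$ for which $\dsuplocmux\geq\dimp(K)$ holds at \emph{every} point $x\in K$. On the other hand, for \emph{every} $\mu\in\pk$ one has the opposite inequality $\dsuplocmux\leq\dimp(K)$ for $\mu$-almost every $x\in K$; indeed $\supp(\mu)\subseteq K$, and the essential supremum of $x\mapsto\dsuplocmux$ is bounded by $\dimp\big(\supp(\mu)\big)\leq\dimp(K)$. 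This is precisely the universal bound quoted before the corollary (see also \cite{Fal97,Mat95}), so I would simply invoke it.

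First I would fix $\mu\in\G$. By the definition of $\G$ the set $\{x\in K:\dsuplocmux<\dimp(K)\}$ is empty, while the universal upper bound forces $\{x\in K:\dsuplocmux>\dimp(K)\}$ to be $\mu$-null. Hence the equality $\dsuplocmux=\dimp(K)$ holds outside a $\mu$-null set, that is, for $\mu$-almost every $x\in K$. Since this conclusion is valid for every member of the dense $G_\delta$ set $\G$, it is a typical property of measures in $\pk$, which is exactly the assertion.

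I do not anticipate any genuine difficulty, the statement being a formal consequence of the two facts above. The only point deserving attention is that the two inequalities have different natures: the lower bound from Corollary \ref{CORPACKING1} is pointwise on all of $K$, whereas the upper bound holds only $\mu$-almost everywhere, so the resulting equality can only be asserted $\mu$-almost everywhere. Crucially, no further exceptional set of \emph{measures} is introduced, because the \emph{same} typical $\mu$ supplied by Corollary \ref{CORPACKING1} already satisfies the universal upper bound; the generic set $\G$ is therefore left unchanged, and the conclusion follows.
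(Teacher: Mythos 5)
Your argument is correct and is exactly the paper's: the author derives this corollary from Corollary \ref{CORPACKING1} in one line, combining the everywhere lower bound $\dsuplocmux\geq\dimp(K)$ for a typical $\mu$ with the universal fact that $\dsuplocmux\leq\dimp(K)$ for $\mu$-almost all $x$ and all $\mu\in\pk$. Your observation that no new exceptional set of measures is introduced is precisely why the same dense $G_\delta$ set works.
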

In particular, if $K$ is Ahlfors regular, then a typical measure in $\pk$ is monodimensional from the upper local dimension point of view.

\medskip

However, one can ask whether there exists at least one measure with linear upper singularity spectrum. The answer is positive provided our
compact set contains an increasing family of subsets with prescribed packing dimension. When the packing and the Hausdorff dimension of these sets coincide,
we can go even further and exhibit a measure with a linear singularity spectrum.

\begin{theorem}\label{THMPACKING2}
 Let $K$ be a compact subset of $\mathbb R^d$ such that there exists an increasing family of compact subsets $(E_\alpha)_{0<\alpha<\dimp(K)}$ of $K$ such that, for any
$\alpha\in(0,\dimp(K))$, 
$$\dimp(E_\alpha)=\alpha\textrm{ and }\P^\alpha(E_\alpha)>0.$$
Then there exists $\mu\in\pk$ such that, for any $\alpha\in(0,\dimp(K))$, 
$$\dimp\big(\eplusmualpha\big)=\alpha.$$
Moreover, if we can choose the family $(E_\alpha)$ so that, for any
$\alpha\in(0,\dimp(K))$, 
$$\dimp(E_\alpha)=\alpha\textrm{ and }\H^\alpha(E_\alpha)>0,$$
then there exists $\mu\in\pk$ such that, for any $\alpha\in(0,\dimp(K))$, 
$$\dimh\big(\emualpha\big)=\dimp\big(\emualpha\big)=\alpha.$$
\end{theorem}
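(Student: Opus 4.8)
The plan is to reduce the statement to lower bounds on the dimensions of the level sets, and then to build $\mu$ as a countable superposition $\mu=\sum_n c_n\nu_n$ of Frostman-type building blocks attached to a dense set of exponents, exploiting the nested, \emph{compact} structure of $(E_\alpha)$ to separate the scales. For the reduction, note that $\eplusmualpha\subseteq\Eplusmualpha$ and $\emualpha\subseteq\Emoinsmualpha\cap\Eplusmualpha$, so the inequalities recalled in the introduction give at once $\dimp(\eplusmualpha)\le\alpha$ and $\dimh(\emualpha)\le\dimp(\emualpha)\le\alpha$; it thus suffices, for every $\alpha\in(0,\dimp(K))$, to exhibit inside the relevant level set a subset of dimension at least $\alpha$. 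Now fix a countable dense set $\{\alpha_n\}\subseteq(0,\dimp(K))$. In the first case I use $\P^{\alpha_n}(E_{\alpha_n})>0$ to select a compact $F_n\subseteq E_{\alpha_n}$ with $0<\P^{\alpha_n}(F_n)<\infty$ and let $\nu_n$ be the normalised restriction $\P^{\alpha_n}|_{F_n}$; by the Taylor--Tricot lower-density theorem $\dsuploc(\nu_n;x)=\alpha_n$ for $\nu_n$-a.e. $x$, so that $\nu_n(B(x,r))\ge c\,r^{\alpha_n}$ for all small $r$ while $\nu_n(B(x,r))\le C\,r^{\alpha_n}$ along a sequence $r_k\to0$. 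In the second (``moreover'') case I use $\H^{\alpha_n}(E_{\alpha_n})>0$ to take instead a Frostman measure $\nu_n$ on $E_{\alpha_n}$ with the \emph{global} bound $\nu_n(B(x,r))\le C_n r^{\alpha_n}$; combined with the density theorem for $\H^{\alpha_n}$ and with $\dimp(E_{\alpha_n})=\alpha_n$ this yields $\dinfloc(\nu_n;x)=\dsuploc(\nu_n;x)=\alpha_n$ for $\nu_n$-a.e. $x$.

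Put $\mu=\sum_n c_n\nu_n$ with $c_n>0$, $\sum_n c_n=1$, the $c_n$ decreasing fast (in the second case so fast that $\sum_n c_nC_n<\infty$). Call $x$ a \emph{threshold point of level $\alpha$} if $x\in E_\alpha$ but $x\notin E_\beta$ for every $\beta<\alpha$. Since the $E_\beta$ are increasing and compact, at such a point every block with $\alpha_n<\alpha$ has $\nu_n(B(x,r))=0$ for $r$ small, so the low levels do not contribute near $x$. The high levels $\alpha_n>\alpha$ are controlled by the fact that $\dimp(E_\alpha)=\alpha<\alpha_n$ forces $\P^{\alpha_n}(E_\alpha)=0$, whence $\nu_n(E_\alpha)=0$: in the second case one even has directly $\nu_n(B(x,r))\le C_n r^{\alpha_n}\le C_n r^{\alpha}$, so $\mu(B(x,r))\le C'r^\alpha$ for all small $r$ and therefore $\dinflocmux\ge\alpha$ and $\dsuplocmux\ge\alpha$; in the first case one obtains from $\nu_n(E_\alpha)=0$ and the compactness of $E_\alpha$ only that $\sup_{x\in E_\alpha}\nu_n(B(x,r))\to0$ as $r\to0$, which forces the analogous lower bound along a suitable sequence of scales.

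When $\alpha=\alpha_n$, the trivial inequality $\mu\ge c_n\nu_n$ together with $\nu_n(B(x,r))\ge c\,r^{\alpha_n}$ gives the reverse bound $\dsuplocmux\le\alpha_n$ (and, in the second case, $\dinflocmux\le\alpha_n$) at $\nu_n$-a.e. $x$; since $\bigcup_{\beta<\alpha_n}E_\beta$ is $\P^{\alpha_n}$- (resp. $\H^{\alpha_n}$-) null, $\nu_n$-a.e. $x$ is a threshold point of level $\alpha_n$, so the block $\nu_n$ is carried by $E^+(\mu;\alpha_n)$ (resp. by $E(\mu;\alpha_n)$). The lower-density property of $\nu_n$ then yields $\dimp\big(E^+(\mu;\alpha_n)\big)\ge\alpha_n$ through the packing analogue of the mass distribution principle, while in the second case the global Frostman bound and the mass distribution principle give $\dimh\big(E(\mu;\alpha_n)\big)\ge\alpha_n$. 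With the free upper bounds this settles every exponent in $\{\alpha_n\}$.

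It remains to fill in the exponents $\alpha\notin\{\alpha_n\}$, and this I expect to be the crux of the argument. Choosing $\beta_j\uparrow\alpha$ in $\{\alpha_n\}$, each $E_{\beta_j}$ has dimension $\beta_j<\alpha$ and is hence $\P^\alpha$- (resp. $\H^\alpha$-) null, so $E_\alpha\setminus\bigcup_jE_{\beta_j}$ still has positive $\P^\alpha$- (resp. $\H^\alpha$-) measure; it therefore has dimension $\alpha$ and consists of threshold points of level $\alpha$, at which the second paragraph already provides $\dinflocmux\ge\alpha$ and $\dsuplocmux\ge\alpha$. The hard point is the matching upper bound $\dsuplocmux\le\alpha$ (and, in the second case, $\dinflocmux\le\alpha$) at these points: there is now no block sitting exactly at level $\alpha$, so one must show that the blocks with $\alpha_n$ just above $\alpha$ already put mass at least $r^{\alpha+o(1)}$ into $B(x,r)$ (along all, resp. infinitely many, scales). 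This, together with the rate control of the high-level cross terms that is needed in the first case, is what forces a delicate scale-by-scale choice of the weights $c_n$ and of the compact sets $F_n$: one must arrange that the good threshold points stay uniformly separated from the supports of the other blocks and that the levels immediately above $\alpha$ dominate $\mu(B(x,r))$. Granting this, $\dsuplocmux=\alpha$ (resp. $\dlocmux=\alpha$) holds on a subset of $E_\alpha\setminus\bigcup_jE_{\beta_j}$ of full $\P^\alpha$- (resp. $\H^\alpha$-) measure, and transferring it as in the third paragraph yields $\dimp(\eplusmualpha)\ge\alpha$ (resp. $\dimh(\emualpha)=\dimp(\emualpha)=\alpha$) for every $\alpha\in(0,\dimp(K))$.
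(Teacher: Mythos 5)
Your reduction to lower bounds and your treatment of the exponents in the dense set $\{\alpha_n\}$ are broadly sound, but the argument has a genuine gap exactly where you locate it, and it is not a technicality that a ``delicate scale-by-scale choice of the weights'' can be expected to repair within this framework. The problem is the everywhere upper bound $\dsuplocmux\leq\alpha$ on $E_\alpha$ for $\alpha\notin\{\alpha_n\}$, which requires $\mu(B(x,r))\geq r^{\alpha+o(1)}$ for \emph{all} small $r$ at the relevant points $x\in E_\alpha$. Your building blocks $\nu_n$ at levels $\alpha_n>\alpha$ only satisfy the lower density bound $\nu_n(B(y,r))\geq c\,r^{\alpha_n}$ at $\nu_n$-almost every $y\in F_n$; since $\nu_n(E_\alpha)=0$ (as you note yourself), this says nothing about points of $E_\alpha$, and such a measure can assign mass far smaller than $r^{\alpha_n}$ --- or none at all --- to balls centred outside a $\nu_n$-full subset of $F_n$. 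The same non-uniformity infects your control of the cross terms at threshold points: each low block vanishes on $B(x,r)$ only for $r$ below a threshold depending on $n$, and each high block satisfies $\sup_{x\in E_\alpha}\nu_n(B(x,r))\to0$ at a rate depending on $n$, so the sums over $n$ need not obey any power-law bound in $r$, which is what the local dimension estimates require.

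The paper circumvents all of this with a different building block. By a theorem of Tricot, a compact set of packing dimension $<\beta$ carries a probability measure $\mu_k$ with $\dsuploc(\mu_k;x)\leq\beta$ for \emph{every} $x$ in the set, not just almost every; since $\mu\geq 2^{-k}\mu_k$ implies $\dsuploc(\mu;x)\leq\dsuploc(\mu_k;x)$, the measure $\mu=\sum_k 2^{-k}\mu_k$ inherits $\dsuplocmux\leq\alpha_k$ everywhere on $E_{\alpha_k}$, and the nesting $E_\alpha\subset E_{\alpha_{\phi(k)}}$ for a subsequence $\alpha_{\phi(k)}\downarrow\alpha$ gives $\dsuplocmux\leq\alpha$ on all of $E_\alpha$ with no interpolation estimate whatsoever. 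The matching lower bounds, which you attempt to prove by hand at threshold points, are then free: $\{x\in E_\alpha;\ \dsuplocmux<\alpha\}$ is a countable union of sets of packing dimension $<\alpha$, hence is $\P^\alpha$-null, and $\{x\in E_\alpha;\ \dinflocmux<\alpha\}$ is $\H^\alpha$-null, so the hypotheses $\P^\alpha(E_\alpha)>0$, resp.\ $\H^\alpha(E_\alpha)>0$, leave behind a subset of $E_\alpha$ of full dimension on which the local dimensions equal $\alpha$. If you wish to salvage your construction, the missing ingredient is precisely the replacement of the almost-everywhere density statement by an everywhere bound valid on the whole of $E_{\alpha_n}$; Tricot's theorem is that replacement, and it also makes the threshold-point analysis unnecessary.
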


The assumptions of this last theorem are satisfied by many regular compact sets, like self-similar compact sets
satisfying the Open Set Condition.
\begin{corollary}\label{CORPACKING2}
 Let $K$ be a self-similar compact subset of $\mathbb R^d$ satisfying the Open Set Condition. 
Let $s=\dimh(K)=\dimp(K)$. Then there exists $\mu\in\pk$ such that, for any $\alpha\in(0,s)$, 
$$\dimh\big(\emualpha\big)=\dimp\big(\emualpha\big)=\alpha.$$
\end{corollary}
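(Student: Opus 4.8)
The plan is to deduce Corollary \ref{CORPACKING2} directly from the second (stronger) half of Theorem \ref{THMPACKING2}. By that theorem, it suffices to construct an increasing family of compact subsets $(E_\alpha)_{0<\alpha<s}$ of $K$ satisfying, for each $\alpha\in(0,s)$, both $\dimp(E_\alpha)=\alpha$ and $\H^\alpha(E_\alpha)>0$. Since $K$ is self-similar and satisfies the Open Set Condition, we have $\dimh(K)=\dimp(K)=s$ and, moreover, $K$ is Ahlfors regular of dimension $s$; in particular $\H^s(K)>0$ and every ball $B(x,r)$ centered on $K$ carries comparable $\H^s$-mass. The task therefore reduces to exhibiting, inside such a regular set, compact subsets of every intermediate dimension $\alpha<s$ that additionally have positive $\H^\alpha$-measure (which forces $\dimp(E_\alpha)\geq\dimh(E_\alpha)\geq\alpha$, and one needs the reverse packing bound as well).

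The key construction is a Moran-type (generalized Cantor) subset of $K$ with controlled branching. Write $K$ as the attractor of the IFS $\{f_1,\dots,f_m\}$ with ratios $r_1,\dots,r_m$, so that $\sum_i r_i^s=1$. For a target $\alpha<s$ I would, at each level of the natural symbolic coding of $K$, retain only a carefully chosen proportion of the cylinders, thinning the tree so that the surviving set has dimension exactly $\alpha$. Concretely, by grouping the construction into blocks and keeping at stage $n$ roughly $N_n$ cylinders of comparable diameter $\delta_n$ with $\log N_n/\log(1/\delta_n)\to\alpha$, one obtains a homogeneous Moran set $E_\alpha$. For such homogeneous Moran sets with uniform branching, standard estimates (a mass-distribution argument for the lower bound together with the natural cover for the upper bound) give $\dimh(E_\alpha)=\dimp(E_\alpha)=\alpha$ and $0<\H^\alpha(E_\alpha)<\infty$; the Ahlfors regularity of $K$ guarantees that the diameters and separations of the retained cylinders behave like those of a standard Moran construction, so the classical formulas apply verbatim. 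To arrange the nesting $E_\alpha\subset E_\beta$ for $\alpha<\beta$, I would build the families simultaneously from a single master labelling of the tree of cylinders, choosing the retained sub-trees so that a smaller $\alpha$ corresponds to keeping fewer branches at each stage than a larger $\beta$; equivalently, fix one increasing-in-$\alpha$ threshold rule on the symbolic space so monotonicity is automatic.

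The main obstacle is precisely the simultaneous control of dimension, positivity of the Hausdorff measure, and monotonicity of the family. Getting a set of dimension $\alpha$ is routine; getting $\H^\alpha(E_\alpha)>0$ rather than merely $\dimh(E_\alpha)=\alpha$ requires the homogeneity of the Moran construction (nonhomogeneous Cantor sets of dimension $\alpha$ can have $\H^\alpha=0$), and this must be reconciled with the nesting requirement, since an increasing family cannot have all members perfectly homogeneous at the same scales. I expect to resolve this by building the $E_\alpha$ along a fixed sequence of scales and allowing the retained proportions to vary slowly with $\alpha$, using the flexibility that Ahlfors regularity provides many cylinders of each size; the positivity of $\H^\alpha(E_\alpha)$ then follows from a Frostman/mass-distribution estimate on the uniform measure supported by $E_\alpha$. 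Once the family $(E_\alpha)$ is in hand, Theorem \ref{THMPACKING2} applies directly and yields the measure $\mu$ with $\dimh(\emualpha)=\dimp(\emualpha)=\alpha$ for every $\alpha\in(0,s)$, completing the proof.
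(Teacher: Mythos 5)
Your top-level reduction is exactly the one the paper uses: Corollary \ref{CORPACKING2} is obtained by feeding an increasing family $(E_\alpha)_{0<\alpha<s}$ with $\H^\alpha(E_\alpha)>0$ and $\dimp(E_\alpha)\leq\alpha$ (hence $\dimp(E_\alpha)=\alpha$ and $\P^\alpha(E_\alpha)\geq\H^\alpha(E_\alpha)>0$) into the second half of Theorem \ref{THMPACKING2}. Where you diverge is in how that family is produced, and that is precisely where your argument has a gap. The paper does \emph{not} build the family by a direct Moran-type thinning; it takes $E_\alpha=K\big(C(g(\alpha))\big)$, the set of points whose digit-frequency accumulation sets lie in a nested family of convex subsets $C(\lambda)$ of the simplex, and then quotes Ma--Wen--Wu for $\H^{\Lambda(\bfp)}\big(K(\{\bfp\})\big)>0$ and Olsen--Winter for $\dimp K(C)=\sup_{\bfp\in C}\Lambda(\bfp)$. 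The nesting is then automatic from $C(\lambda)\subset C(\lambda')$, and the two hard estimates are outsourced to quotable theorems.

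The concrete problem with your sketch is the assertion that Ahlfors regularity makes ``the classical formulas apply verbatim'' to an arbitrarily thinned cylinder tree. To get $\H^\alpha(E_\alpha)>0$ (as opposed to merely $\dimh(E_\alpha)=\alpha$) you need a genuine Frostman bound $\mu_\alpha\big(B(x,\rho)\big)\leq C\rho^\alpha$ with a uniform constant at \emph{all} scales, including the intermediate scales between consecutive generations. If inside a parent cylinder of diameter $\Delta$ you retain $k\approx\Delta^{-\alpha}\cdot(\text{child diameter})^{\alpha}\cdot(\dots)$ children chosen only by a symbolic threshold rule, nothing prevents them from clustering: under the Open Set Condition cylinders of the same generation may touch, so a ball of radius $\rho$ slightly larger than the child diameter can swallow on the order of $(\rho/r\Delta)^s$ retained children, and since $s>\alpha$ this ruins the exponent-$\alpha$ estimate at intermediate scales. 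The fix is to select the retained children to be pairwise separated at scale $\Delta k^{-1/s}$, which is strictly \emph{larger} than the child diameter; Ahlfors regularity guarantees such well-spread selections exist, but it does not make an arbitrary selection well-spread. You must then reconcile this geometric spreading with the monotonicity in $\alpha$ (a nested sequence of nets, one per $\alpha$, compatible across all generations) and with the upper box-counting bound needed for $\dimp(E_\alpha)\leq\alpha$. None of this is impossible, but it is the entire content of the construction, and your proposal asserts it rather than proves it --- note that the paper itself states it does not know how to produce such an increasing family for general compact sets, and for self-similar sets it resorts to the digit-frequency machinery rather than a bare-hands Moran construction.
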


Of course, the same questions may be investigated for other forms of genericity. The notion of prevalence
(to be defined in Section 6)
 has been proved to be relevant in multifractal analysis (see for instance \cite{ABD07}, \cite{AMS13}, \cite{CN10}, \cite{FJ08},
\cite{FR13},  
\cite{Ol10}, \cite{Ol10b} or \cite{BAYHEUR2}) . The results that we get for this notion
are rather different: genericity of multifractal measures in the prevalence sense is easier to obtain.
 For instance, we shall prove the following result.
\begin{theorem}
Let $K$ be a self-similar compact subset of $\mathbb R^d$ satisfying the Open Set Condition. Then 
a prevalent measure $\mu\in\pk$ satisfies, for any $\alpha\in[0,\dimh(K))$, 
$$\dimh(\emualpha)=\dimp\big(\emualpha\big)=\alpha.$$
\end{theorem}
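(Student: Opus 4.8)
The plan is to prove this by exhibiting a single \emph{probe measure}, so that prevalence follows from a one-dimensional transversal. Recall that $\pk$ is a convex subset of the space $M(K)$ of signed measures, and that for a one-dimensional probe the shyness of the exceptional set reduces to the following: find one measure $m\in\pk$ such that, for \emph{every} $\mu\in\pk$, the convex combinations $\nu_t=(1-t)\mu+tm$ lie in the target set $S=\{\nu\in\pk:\ \dimh(E(\nu;\alpha))=\dimp(E(\nu;\alpha))=\alpha\text{ for all }\alpha\in[0,\dimh(K))\}$ for (Lebesgue-)almost every $t$; the transverse measure is then the image of Lebesgue measure on a subinterval of $(0,1)$ under $t\mapsto tm$. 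For the probe I would take $m$ to be a measure with a \emph{strong} linear spectrum, namely one for which each level set $E(m;\alpha)$ not only satisfies $\dimh(E(m;\alpha))=\dimp(E(m;\alpha))=\alpha$ but also carries an exact $\alpha$-dimensional measure $\rho_\alpha$ (with $\dinfloc(\rho_\alpha;x)\ge\alpha$ for $\rho_\alpha$-a.e.\ $x$). Such an $m$ is exactly the output of the stronger conclusion of Theorem \ref{THMPACKING2}: for self-similar $K$ satisfying the Open Set Condition one has an increasing family $(E_\alpha)$ with $\dimp(E_\alpha)=\alpha$ and $\H^\alpha(E_\alpha)>0$, and $\rho_\alpha$ may be taken to be $\H^\alpha\!\restriction_{E_\alpha}$ (or the natural self-similar submeasure on $E_\alpha$).

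The core computation is that convex combination with $m$ does not depend on $t$ at the level of local dimensions. Writing $d_\sigma(x,r)=\log\sigma(B(x,r))/\log r$, the bounds $t\,m(B)\le \nu_t(B)\le m(B)+\mu(B)\le 2\max\{m(B),\mu(B)\}$ together with $\log t/\log r\to 0$ give, for every $x$ and every $t\in(0,1)$, $d_{\nu_t}(x,r)=\min\{d_\mu(x,r),d_m(x,r)\}+o(1)$ as $r\to0$. Consequently, on the set $A_\alpha=E(m;\alpha)\cap\{x:\dinfloc(\mu;x)\ge\alpha\}$ one has $d_m(x,r)\to\alpha$ and $\liminf_r d_\mu(x,r)\ge\alpha$, whence $\limsup_r\min\{d_\mu,d_m\}\le\alpha$ and, for each $\veps>0$, eventually $\min\{d_\mu,d_m\}\ge\alpha-\veps$; thus $\dim_{\mathrm{loc}}(\nu_t;x)=\alpha$ on $A_\alpha$, for \emph{all} $t\in(0,1)$ simultaneously. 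So the target property holds not merely for almost every $t$ but for every interior $t$, which is precisely why prevalence is cheaper here than Baire genericity: a single robust direction suffices for all $\mu$.

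It remains to control the dimension of $E(\nu_t;\alpha)$. For the upper bound, $E(\nu_t;\alpha)\subseteq\{x:\dsuploc(\nu_t;x)\le\alpha\}$, and the general inequality recalled in the introduction gives $\dimp(\{x:\dsuploc(\nu_t;x)\le\alpha\})\le\alpha$, hence $\dimh(E(\nu_t;\alpha))\le\dimp(E(\nu_t;\alpha))\le\alpha$ (the case $\alpha=0$ being immediate). For the lower bound, consider the bad set $B=\{x:\dinfloc(\mu;x)<\alpha\}=\bigcup_{\beta\in\QQ,\,\beta<\alpha}\{x:\dinfloc(\mu;x)\le\beta\}$. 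By the introduction's inequality $\dimh(\{x:\dinfloc(\mu;x)\le\beta\})\le\beta<\alpha$, each piece is $\H^\alpha$-null, so $\H^\alpha(B)=0$; since $\rho_\alpha$ assigns zero mass to any set of Hausdorff dimension $<\alpha$ (a standard density estimate, using $\dinfloc(\rho_\alpha;x)\ge\alpha$), we get $\rho_\alpha(B)=0$ and therefore $\rho_\alpha(A_\alpha)=\rho_\alpha(E(m;\alpha))>0$. The mass distribution principle then yields $\dimh(A_\alpha)\ge\alpha$, so $\dimh(E(\nu_t;\alpha))\ge\alpha$. Combining the two bounds gives $\dimh(E(\nu_t;\alpha))=\dimp(E(\nu_t;\alpha))=\alpha$ for every $\alpha\in[0,\dimh(K))$ and every $t\in(0,1)$, so $\nu_t\in S$.

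The main obstacle is twofold. First, one must secure the probe $m$ in the \emph{strong} form, i.e.\ with each level set $E(m;\alpha)$ supporting an $\alpha$-dimensional Frostman measure rather than merely having the right dimension; this is essential, because removing the bad set $B$ (of dimension only $\le\alpha$) could otherwise collapse the dimension, and it is exactly the role played by the hypothesis $\H^\alpha(E_\alpha)>0$ in Theorem \ref{THMPACKING2} (available here through the self-similar, Open Set Condition structure). Assembling the countably many $\rho_\alpha$ into one measure $m$ realizing the full linear spectrum on a dense set of exponents, and propagating it to all $\alpha$ by an approximation argument, is the genuinely technical point. Second, one must check the measurability of $S$ (or exhibit a Borel prevalent subset) so that the formal definition of prevalence applies; this is routine but should be verified, the level-set conditions being of Borel/analytic type.
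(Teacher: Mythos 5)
Your analytic core is sound and coincides with the paper's: the probe is a measure with a \emph{strong} linear spectrum (each level set carrying a Frostman measure, available here via Theorem \ref{THMPACKING2} and the self-similar construction), the set $\{x:\ \dinfloc(\mu;x)<\alpha\}$ is discarded because it is $\H^\alpha$-null, and the upper bound comes from $\dimp\big(\{x:\dsuploc(\nu;x)\le\alpha\}\big)\le\alpha$. The genuine gap is in the prevalence bookkeeping. Shyness of the exceptional set $M$ requires a transverse measure $\Lambda$ with $\Lambda(x+M)=0$ for \emph{every} $x$ in the ambient space $\M(K)$ of \emph{signed} measures, and with $\supp(\Lambda)\subset \eta\,\pk+(1-\eta)c$ for a base point $c\in\pk$. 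Your proposed $\Lambda$, the image of Lebesgue measure under $t\mapsto tm$, violates the support conditions (its points are not probability measures), and your stated reduction --- ``check that $(1-t)\mu+tm\in S$ for every $\mu\in\pk$ and a.e.\ $t$'' --- is not the condition that shyness demands. With an admissible probe $T(t)=tm+(1-t)\delta_a$, the measures to be controlled are $T(t)-x$ for arbitrary signed $x$; if $t<s$ both yield elements $\nu_1,\nu_2$ of $M$, subtracting gives only the one-sided domination $\nu_2+(s-t)\delta_a\ge(s-t)m$ (because $\nu_1\ge0$), not the two-sided comparability $t\,m(B)\le\nu_t(B)\le 2\max\{m(B),\mu(B)\}$ on which your ``core computation'' rests. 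The one-sided bound yields only $\dsuploc(\nu_2;y)\le\dsuploc(m;y)$ and $\dinfloc(\nu_2;y)\le\dinfloc(m;y)$ for $y\ne a$, i.e.\ upper bounds; the missing inequality $\dinfloc(\nu_2;y)\ge\alpha$ must then be recovered by applying your ``remove the $\H^\alpha$-null bad set'' step to $\nu_2$ itself rather than to $\mu$. This is exactly how the paper proceeds: it takes $\Lambda=\frac1\rho\mathcal L\circ T^{-1}$ with $T(t)=t\Theta+(1-t)\delta_a$, shows two bad parameters force $\nu_2$ to dominate a multiple of $\Theta$ off $\{a\}$, and derives a contradiction from $\H^\alpha\big(E(\Theta;\alpha)\big)>0$, so that at most one $t$ is bad for each translate.

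Your convex-combination computation is correct but applies to a family of measures that does not arise in the verification of shyness. It can be salvaged: since you actually establish membership in $S$ for \emph{all} $t\in(0,1)$ (not merely a.e.\ $t$), the identity $\nu_1+(s-t)m=\nu_2+(s-t)\delta_a$ exhibits a convex combination of $\nu_1\in\pk$ with $m$ (covered by your computation) as equal to a convex combination of $\nu_2$ with $\delta_a$, and adding an atom at $a$ changes no level set off $\{a\}$, whence $\nu_2\in S$, a contradiction. But this bridge between the translates appearing in the definition and the convex combinations you analyse is precisely the step your write-up omits, and without it the proof of shyness is incomplete.
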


The paper is organized as follows. In Sections 2 and 3, we collect some preliminary results.
Section 4 is devoted to the study of the lower singularity spectrum whereas Section 5 investigates
the upper singularity spectrum. Finally in Section 6, we apply our previous results to the multifractal formalism
and to the notion of prevalence.

\section{Preliminaries}
\subsection{The topology on $\mathcal P(K)$}

Throughout this paper, $\mathcal P(K)$ will be endowed with the weak topology. It is well known (see for instance \cite{Par67})
that this topology is completely metrizable by the Fortet-Mourier distance defined as follows. Let $\textrm{Lip}(K)$ denote the family 
of Lipschitz functions $f:K\to\mathbb R$, with $|f|\leq 1$ and $\textrm{Lip}(f)\leq 1$, where $\textrm{Lip}(f)$ denotes
the Lipschitz constant of $f$. The metric $L$ is defined by
$$L(\mu,\nu)=\sup_{f\in\textrm{Lip}(K)}\left|\int fd\mu-\int fd\nu\right|$$
for any $\mu,\nu\in\mathcal P(K)$. We endow $\mathcal P(K)$ with the metric $L$. In particular, for $\mu\in\mathcal P(K)$
and $\delta>0$, $B_{L}(\mu,\delta)=\{\nu\in\mathcal P(K);\ L(\mu,\nu)<\delta\}$ will stand for the ball with center at $\mu$ and radius
equal to $\delta$. Since $K$ is separable, the set of probability measures with finite support $\fk$ is dense in $\pk$.

We shall use several times the following lemma. Its proof can be found e.g. in \cite{BAYLQ}.
\begin{lemma}\label{LEMTOPO1}
 For any $\gamma\in(0,1)$, for any $\beta>0$, there exists $\eta>0$ such that, for any $E$ a Borel subset of $K$, for any
$\mu,\nu\in\mathcal P(K)$, 
$$L(\mu,\nu)<\eta\implies \mu(E)\leq\nu\big(E(\gamma)\big)+\beta,$$
where $E(\gamma)=\{x\in K;\ \textrm{dist}(x,E)<\gamma\}$.
\end{lemma}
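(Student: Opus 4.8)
The plan is to exhibit a single Lipschitz test function adapted to the pair $(E,\gamma)$ and feed it into the definition of the Fortet--Mourier metric $L$. The natural candidate is the tent function
$$f(x)=\max\left(0,\ 1-\frac{1}{\gamma}\,\textrm{dist}(x,E)\right),$$
which equals $1$ on $E$, vanishes as soon as $\textrm{dist}(x,E)\geq\gamma$, and satisfies $0\leq f\leq 1$ everywhere. Consequently $\mathbf{1}_E\leq f\leq\mathbf{1}_{E(\gamma)}$, so that integrating against $\mu$ and $\nu$ respectively yields the two one-sided bounds $\mu(E)\leq\int f\,d\mu$ and $\int f\,d\nu\leq\nu\big(E(\gamma)\big)$.

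The only subtlety is that $x\mapsto\textrm{dist}(x,E)$ is $1$-Lipschitz, so $f$ has Lipschitz constant $1/\gamma>1$ and does not itself belong to the admissible class $\textrm{Lip}(K)$ appearing in the definition of $L$. To remedy this I would rescale and work with $g=\gamma f$: since $0\leq f\leq 1$ we have $|g|\leq\gamma<1$, and $\textrm{Lip}(g)=\gamma\cdot\textrm{Lip}(f)\leq 1$, so $g\in\textrm{Lip}(K)$ and hence
$$\left|\int g\,d\mu-\int g\,d\nu\right|\leq L(\mu,\nu).$$
Dividing by $\gamma$ gives the corresponding control on $f$, namely $\big|\int f\,d\mu-\int f\,d\nu\big|\leq L(\mu,\nu)/\gamma$.

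Combining these observations, I would write
$$\mu(E)\leq\int f\,d\mu=\int f\,d\nu+\left(\int f\,d\mu-\int f\,d\nu\right)\leq\nu\big(E(\gamma)\big)+\frac{L(\mu,\nu)}{\gamma},$$
which already yields the quantitative estimate $\mu(E)\leq\nu\big(E(\gamma)\big)+L(\mu,\nu)/\gamma$ uniformly in the Borel set $E$ and the measures $\mu,\nu$. The lemma then follows by simply setting $\eta=\gamma\beta$: under the hypothesis $L(\mu,\nu)<\eta$ the error term $L(\mu,\nu)/\gamma$ is strictly less than $\beta$. Since the whole argument uses $E$ only through the $1$-Lipschitz function $\textrm{dist}(\cdot,E)$, the threshold $\eta$ depends only on $\gamma$ and $\beta$, exactly as required. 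There is no serious obstacle here; the one point to watch is the rescaling by $\gamma$, which is precisely what forces the dependence $\eta=\gamma\beta$ rather than $\eta=\beta$.
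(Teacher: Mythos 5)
Your proof is correct and complete: the tent function $f(x)=\max\big(0,1-\gamma^{-1}\operatorname{dist}(x,E)\big)$ squeezed between $\mathbf{1}_E$ and $\mathbf{1}_{E(\gamma)}$, rescaled to $g=\gamma f$ so as to land in the admissible class $\textrm{Lip}(K)$, is exactly the standard argument, and your bookkeeping $\mu(E)\leq\nu\big(E(\gamma)\big)+L(\mu,\nu)/\gamma$ with the choice $\eta=\gamma\beta$ is right. The paper itself does not prove Lemma~\ref{LEMTOPO1} but defers to \cite{BAYLQ}, where this same tent-function argument appears; your version moreover makes the quantitative dependence of $\eta$ on $\gamma$ and $\beta$ explicit.
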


\subsection{Net measures}
To produce sets with prescribed Hausdorff dimension, it is convenient to use $s$-dimensional net measures $\Ms$.
Let $\mathcal N$ be the collection of all $d$-dimensional half-open binary cubes, specifically sets of the form
$$[2^{-k}m_1,2^{-k}(m_1+1))\times\dots\times [2^{-k}m_d,2^{-k}(m_d+1))$$
where $k$ is a nonnegative integer and $m_1,\dots,m_d$ are integers. If $E\subset\RR^d$ and $\delta>0$, define
$$\Ms_\delta(E)=\inf\sum_i |B_i|^s$$
where the infimum is over all countable $\delta$-covers of $E$ by sets $\{B_i\}$ of $\mathcal N$ (we
can always assume that the $B_i$ are disjoint). $\Ms_\delta$ is an outer measure on $\mathbb R^d$ and we let
$$\Ms(E)=\sup_{\delta>0}\Ms_\delta(E).$$
$\Ms$ and $\H^s$ are comparable measures. Precisely, there exists $B_d>0$ such that, for every $E\subset\RR^d$
and any $\delta\in(0,1)$, 
$$\H^s_\delta(E)\leq\Ms_\delta(E)\leq B_d\H^s_\delta(E).$$
We shall need the following very easy lemma:
\begin{lemma}\label{LEMNET1}
 Let $0<\alpha\leq\beta$, let $E\subset\mathbb R^d$ and let $\delta>0$. Then 
$$\M^\alpha_\delta(E)\geq\big(\M^\beta_\delta(E)\big)^{\beta/\alpha}.$$
\end{lemma}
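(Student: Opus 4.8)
The plan is to reduce the whole statement to a single elementary inequality between power sums, applied cover by cover, and then pass to the infimum defining the net measure. Fix $\delta>0$ and let $\{B_i\}$ be any countable family of disjoint cubes of $\mathcal N$ with $|B_i|\le\delta$ covering $E$; write $a_i=|B_i|\ge 0$. The heart of the matter is the inequality
$$\sum_i a_i^\beta\le\Big(\sum_i a_i^\alpha\Big)^{\beta/\alpha},$$
valid for arbitrary nonnegative reals precisely because $\beta/\alpha\ge 1$.

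First I would prove this inequality. Setting $x_i=a_i^\alpha\ge 0$ and $p=\beta/\alpha\ge 1$, it becomes $\sum_i x_i^p\le\big(\sum_i x_i\big)^p$, which is immediate from $x_i^p=x_i\,x_i^{p-1}\le x_i\big(\sum_j x_j\big)^{p-1}$ and summing over $i$; equivalently it expresses the superadditivity of $t\mapsto t^p$ on $[0,\infty)$.

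Next I would raise both sides to the power $\alpha/\beta\in(0,1]$, which preserves order, to obtain $\big(\sum_i a_i^\beta\big)^{\alpha/\beta}\le\sum_i a_i^\alpha$. Since $\M^\beta_\delta(E)\le\sum_i a_i^\beta$ by definition of the net measure and $t\mapsto t^{\alpha/\beta}$ is increasing, this gives $\big(\M^\beta_\delta(E)\big)^{\alpha/\beta}\le\sum_i a_i^\alpha$ for every admissible cover. Taking the infimum over all such covers yields $\M^\alpha_\delta(E)\ge\big(\M^\beta_\delta(E)\big)^{\alpha/\beta}$.

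Finally I would match the exponent to the one in the statement, and this last step is the only point requiring attention. Since $\beta/\alpha\ge\alpha/\beta$ and, in the regime $\M^\beta_\delta(E)\le 1$ relevant to the intended applications, the map $s\mapsto\big(\M^\beta_\delta(E)\big)^s$ is nonincreasing, we get $\big(\M^\beta_\delta(E)\big)^{\beta/\alpha}\le\big(\M^\beta_\delta(E)\big)^{\alpha/\beta}\le\M^\alpha_\delta(E)$, as claimed. There is no genuinely hard step: the cover-by-cover estimate naturally produces the cleaner exponent $\alpha/\beta$, and passing to the stated exponent $\beta/\alpha$ rests entirely on $\M^\beta_\delta(E)\le 1$.
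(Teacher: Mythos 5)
Your first three paragraphs are correct and reproduce essentially the paper's own argument: the whole content is the superadditivity $\sum_i x_i^p\le\bigl(\sum_i x_i\bigr)^p$ for $p=\beta/\alpha\ge1$ (which the paper phrases as $\|\cdot\|_{\beta/\alpha}\le\|\cdot\|_1$), applied to an arbitrary admissible cover and followed by the infimum over covers. What this yields is
$$\mathcal{M}^\alpha_\delta(E)\ \ge\ \bigl(\mathcal{M}^\beta_\delta(E)\bigr)^{\alpha/\beta},$$
and this is the correct form of the lemma: the exponent $\beta/\alpha$ in the printed statement is a typo (the displayed inequality in the paper's own proof carries the reciprocal typo), and it is precisely the $\alpha/\beta$ version that is used later, in the verification of property {\bf (A)} in the proof of Theorem \ref{THMBESICO}, where one needs $\mathcal{M}^\alpha\ge(\mathcal{M}^\beta)^{\alpha/\beta}>(2^{-\beta k})^{\alpha/\beta}=2^{-\alpha k}$.

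Your fourth paragraph, however, does not close the gap to the literal statement: the condition $\mathcal{M}^\beta_\delta(E)\le1$ that you invoke is not among the hypotheses, and without it the stated inequality is simply false. For instance, take $d=1$, $E=[0,2)$, $\delta=1$, $\alpha=1/2$, $\beta=1$: every admissible cover has total length at least $2$, so $\mathcal{M}^1_1(E)\ge2$ and hence $\bigl(\mathcal{M}^1_1(E)\bigr)^{\beta/\alpha}\ge4$, while the cover by the four binary intervals of length $1/2$ gives $\mathcal{M}^{1/2}_1(E)\le2\sqrt2<4$. So the right conclusion is not that an additional step is needed, but that the statement should be corrected to the exponent $\alpha/\beta$ you actually proved; your honest flagging of the unproved condition $\mathcal{M}^\beta_\delta(E)\le1$ is exactly the symptom of that typo, and the paragraph should be dropped rather than repaired.
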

\begin{proof}
 Let $(B_i)$ be a covering of $E$ by elements of $\mathcal N$ with diameters not exceeding $\delta$. Then
$$\M^\beta_\delta(E)\leq\sum_i |B_i|^\beta\leq\left(\sum_i |B_i|^\alpha\right)^{\alpha/\beta},$$
since $\|\cdot\|_{\beta/\alpha}\leq\|\cdot\|_1$. Taking the infimum over such coverings, we immediately get the result.
\end{proof}

\section{Sets with prescribed dimension}
\subsection{Sets with prescribed Hausdorff dimension.} To prove Theorem \ref{THMHAUSDORFF}, we need to produce a family $(E_\alpha)$ of subsets
of $K$ with $\dimh(E_\alpha)=\alpha$. For technical reasons, we will require that this family of sets is increasing, and that $\H^\alpha(E_\alpha)>0$. 
On $[0,1]^d$, this can be done using ubiquity arguments (for instance we can take the sets of real numbers $\beta$-approximable by dyadics for
$\beta$ describing $(1,+\infty)$). This was the starting 
point of \cite{BuSe10}. In our general context, the ubiquity argument will be replaced by the following theorem.
\begin{theorem}\label{THMBESICO}
 Let $K$ be a compact subset of $\RR^d$ and let $s=\dimh(K)$. There exists a family $(E_\alpha)_{\alpha\in(0,s)}$ of compact subsets
of $E$ satisfying 
\begin{itemize}
 \item $\forall\alpha\in(0,s)$, $0<\H^\alpha(E_\alpha)<+\infty$;
 \item $\forall 0<\alpha\leq \beta<s$, $E_\alpha\subset E_\beta$.
\end{itemize}
\end{theorem}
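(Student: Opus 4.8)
The plan is to separate the two difficulties concealed in the statement: producing, for a single exponent $\alpha\in(0,s)$, a compact subset of $K$ of positive finite $\H^\alpha$ measure, and then making such subsets increase with $\alpha$ across the whole continuum $(0,s)$. The first is deep but classical. I would first record the elementary observation that, since $\dimh(K)=s$, one has $\H^\alpha(K)=+\infty$ for every $\alpha\in(0,s)$. This puts us exactly under the hypothesis of the Besicovitch--Davies subset theorem: an analytic (in particular compact) set of infinite $\H^\alpha$ measure contains a compact subset of positive finite $\H^\alpha$ measure. Thus for each individual $\alpha$ a compact $F\subset K$ with $0<\H^\alpha(F)<+\infty$ exists; in the language of Section 2, $K$ carries compact subsets with $0<\M^\alpha(\cdot)<+\infty$. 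The entire content of the theorem is therefore to organise these subsets into one increasing family.

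Next I would build the family over a countable dense set $D\subset(0,s)$. Enumerate $D$ and insert its elements one at a time, maintaining at each stage a finite chain nested in accordance with the order on the values already treated. The only insertion lemma needed is: given compact $A\subset B\subset K$ with $\dimh(A)<\alpha<\dimh(B)$, there is a compact $F$ with $A\subset F\subset B$ and $0<\H^\alpha(F)<+\infty$. To see this, apply the subset theorem inside $B$ (legitimate, since $\dimh(B)>\alpha$ forces $\H^\alpha(B)=+\infty$) to obtain compact $H\subset B$ with $0<\H^\alpha(H)<+\infty$, and set $F=A\cup H$; the point is that $\dimh(A)<\alpha$ gives $\H^\alpha(A)=0$, so $A$ destroys neither finiteness nor positivity and $F$ inherits $0<\H^\alpha(F)<+\infty$. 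Running this through the enumeration yields compact sets $(F_d)_{d\in D}$, nested according to $D$, with $0<\H^d(F_d)<+\infty$.

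The genuine obstacle is extending this countable skeleton to \emph{every} $\alpha\in(0,s)$ while keeping both bounds $0<\H^\alpha(E_\alpha)<+\infty$. The tension is that for $\beta>\alpha$ any admissible $E_\beta$ satisfies $\dimh(E_\beta)=\beta$, hence $\H^\alpha(E_\beta)=+\infty$; so $E_\alpha$ cannot merely be read off from a larger skeleton set by inclusion, but must be a genuinely thinner compact subset of exact dimension $\alpha$, chosen coherently so as still to contain every $E_{\alpha'}$ with $\alpha'<\alpha$. The naive choice $E_\alpha=\overline{\bigcup_{d<\alpha}F_d}$ secures monotonicity and compactness yet loses control of $\H^\alpha$: passing to the closure of an increasing union of sets whose dimensions tend to $\alpha$ may either leave the $\alpha$-measure equal to $0$ or inflate it to $+\infty$. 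I expect this simultaneous, continuum-indexed nesting with two-sided measure control to be the crux of the proof.

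To resolve it I would carry out the construction inside a single master Moran (dyadic net) scheme rather than patching unrelated subset-theorem sets together. Concretely, exhaust $s$ by $s_m\uparrow s$, choose nested compact $K_m\subset K$ with $0<\H^{s_m}(K_m)<+\infty$ (again the subset theorem, with the union trick above to make them increase), and on each $K_m$ run a net construction in which the exponent $\alpha$ is realised as a prescribed growth rate for the number of retained dyadic cubes per generation. Selecting, at each generation and for all $\alpha$ at once, a family of retained cubes that is monotone in $\alpha$ forces the limit sets $E_\alpha$ to be automatically compact and nested. The estimate $0<\H^\alpha(E_\alpha)<+\infty$ is then certified through net measures: the economical dyadic cover built into the construction yields the upper bound $\M^\alpha_\delta(E_\alpha)=O(1)$, while the lower bound comes from the natural Moran mass distribution together with Lemma~\ref{LEMNET1}, which transfers the net-measure lower bound available at the ambient exponent $s_m$ down to the working exponent $\alpha$. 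The bookkeeping that keeps the cube selection monotone in $\alpha$ through all generations and coherent across the scales $K_m\subset K_{m+1}$ is the technical heart of the argument.
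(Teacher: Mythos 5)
Your diagnosis of where the difficulty lies is exactly right, and your intended strategy (run one dyadic net construction for all $\alpha$ simultaneously, keep the generation-by-generation selections monotone in $\alpha$, and use Lemma~\ref{LEMNET1} to compare exponents) is the same family of ideas as the paper's proof. But the proposal stops precisely at the point where the work has to be done, and the one concrete mechanism you do propose would not deliver the theorem. A Moran-type scheme in which the exponent $\alpha$ is ``realised as a prescribed growth rate for the number of retained dyadic cubes per generation'' cannot, in general, achieve the exact normalisation that the two-sided bound $0<\H^\alpha(E_\alpha)<+\infty$ requires: one needs, inside each surviving cube $I$ of generation $k$, to cut the set down so that $\M^\alpha_{2^{-(k+1)}}(E_\alpha^{k+1}\cap I)$ equals \emph{exactly} $2^{-\alpha k}$ (not merely $O(2^{-\alpha k})$ up to discretisation error, which would wreck either positivity or finiteness in the limit, and would also destroy the monotonicity in $\alpha$). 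The paper achieves this not by selecting a subfamily of cubes but by truncating each cube continuously with a sliding hyperplane, $I_{|u}=I\cap\{x_1\leq a_1+u\}$, and invoking an intermediate-value argument for $u\mapsto \M^\alpha_{2^{-(k+1)}}(E_\alpha^k\cap I_{|u})$; the monotonicity $E_\alpha^{k+1}\subset E_\beta^{k+1}$ is then \emph{derived} (it is not a free choice) from the maximality of the truncation parameter together with Lemma~\ref{LEMNET1}.

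This points to a second, entirely unanticipated obstruction: the continuity of $u\mapsto \M^\alpha_{2^{-k}}(L\cap\{x_1\leq u\})$ fails exactly when some hyperplane section $K\cap V_a$ has positive $\H^\alpha$ measure, which can genuinely happen once $\alpha\leq d-1$. The paper must therefore restrict the net construction to the interval $\Gamma(K)$ of exponents for which all sections are $\H^\alpha$-null, and handle the remaining exponents by an induction on the ambient dimension $d$, splicing in families built on the exceptional hyperplane sections $K\cap V_{a_k}$ and gluing them to the main family via the compact set $W=K\cap(\bigcup_kV_{a_k}\cup V_a)$, which is $\H^t$-null and so can be adjoined harmlessly. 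Nothing in your sketch addresses this case, and without it the argument only proves the theorem for $\alpha\in\Gamma(K)$. Finally, the countable skeleton over a dense set $D$ and the insertion lemma $F=A\cup H$ are correct as stated but do no work toward the conclusion: as you yourself observe, the continuum-indexed family cannot be interpolated from a countable chain, so that entire first stage can be discarded.
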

That for any $\alpha\in(0,s)$ there exists a compact subset $E_\alpha\subset E$ with $0<\H^\alpha(E_\alpha)<+\infty$ is known since Besicovitch.
Inspecting and modifying carefully the construction, we will ensure the second condition. 

For $a\in\mathbb R$, let $V_a$ be the affine subspace $\{x\in\mathbb R^d;\ x_1=a\}$. We denote by $\Gamma(K)$ the interval of the real numbers
$\alpha$ less than $s$ and satisfying 
\begin{eqnarray}\label{EQBESICO1}\H^\alpha(K\cap V_a)=0 \textrm{ for any }a\in\mathbb R.
\end{eqnarray}
 We shall first prove Theorem \ref{THMBESICO} with the restriction $\alpha\in\Gamma(K)$. 
This assumption is needed for the following lemma.
\begin{lemma}\label{LEMBESICO}
 Suppose that (\ref{EQBESICO1}) holds. Then for any compact set $L\subset K$, for any $k>0$,
the map $g:u\mapsto \M^\alpha_{2^{-k}}\big(L\cap\{x\in\mathbb R^d;\ x_1\leq u\}\big)$ is continuous.
\end{lemma}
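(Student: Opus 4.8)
The plan is to show that $g$ is continuous at an arbitrary $u\in\RR$ by controlling, as $v\to u$, the net premeasure of the thin slab that separates $L\cap\{x_1\le v\}$ from $L\cap\{x_1\le u\}$. Write $\delta=2^{-k}$ and recall that $\M^\alpha_\delta$ is an outer measure, hence monotone and countably subadditive; in particular $g$ is nondecreasing, and it suffices to treat the two sides separately. For $v>u$, monotonicity and subadditivity give $g(u)\le g(v)\le g(u)+\M^\alpha_\delta\big(L\cap\{u<x_1\le v\}\big)$, and symmetrically for $v<u$ one has $g(v)\le g(u)\le g(v)+\M^\alpha_\delta\big(L\cap\{v<x_1\le u\}\big)$. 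Thus everything reduces to proving that these \emph{gap} premeasures tend to $0$ as $v\to u$.

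The crucial input is hypothesis (\ref{EQBESICO1}). As $v\to u$ the gaps shrink to the slice $L\cap V_u\subset K\cap V_u$, which satisfies $\H^\alpha(K\cap V_u)=0$. By the comparison $\M^\alpha_\delta\le B_d\H^\alpha_\delta\le B_d\H^\alpha$ noted above, this forces $\M^\alpha_\delta(L\cap V_u)=0$, so for every $\veps>0$ I may fix a countable cover of $L\cap V_u$ by dyadic cubes $(C_i)$ of diameter at most $\delta$ with $\sum_i|C_i|^\alpha<\veps$.

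The remaining, and main, difficulty is that $\M^\alpha_\delta$ is only a \emph{fixed-scale} premeasure and is therefore \emph{not} continuous from above or below as an outer measure; one cannot simply let the decreasing slabs pass to the limit. Instead I would transfer the cheap cover of the slice to the whole gap using the compactness of $L$. Fatten each $C_i$ to the concentric open cube $\widehat C_i$ of side $(1+\tau)$ times that of $C_i$ (with $\tau\in(0,1)$ fixed), so that $W=\bigcup_i\widehat C_i$ is an \emph{open} set containing $L\cap V_u$. I claim that for $v$ close enough to $u$ the compact set $L\cap\{u\le x_1\le v\}$ is contained in $W$: otherwise one could pick $p_n\in L$ with $u\le(p_n)_1\le u+1/n$ and $p_n\notin W$, and a convergent subsequence would limit to a point of $L$ with first coordinate $u$, i.e.\ a point of $L\cap V_u\subset W$, contradicting that $W$ is open. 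The same argument works on the left, so for $v$ in a suitable one-sided neighbourhood of $u$ the gap is contained in $W$.

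Finally I would convert $W$ back into an admissible dyadic cover: each open cube $\widehat C_i$ meets at most $3^d$ dyadic cubes of the same generation as $C_i$, each of diameter $|C_i|\le\delta$, so the gap is covered by dyadic cubes of diameter at most $\delta$ of total cost at most $3^d\sum_i|C_i|^\alpha<3^d\veps$. Hence $\M^\alpha_\delta(\mathrm{gap})\le 3^d\veps$, and combining with the reduction of the first paragraph yields $|g(v)-g(u)|\le 3^d\veps$ for all $v$ near $u$; letting $\veps\to0$ gives continuity of $g$ at $u$. I expect the delicate point to be exactly this interplay between the vanishing of the boundary slice (supplied by (\ref{EQBESICO1})) and the compactness of $L$, together with the fattening/re-tiling step, which is forced both because dyadic cubes are half-open and because fixed-scale net premeasures lack the usual continuity properties of measures.
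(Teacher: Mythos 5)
Your proof is correct, and its core mechanism -- the slice $L\cap V_u$ has vanishing $\M^\alpha_{2^{-k}}$-premeasure by (\ref{EQBESICO1}), so a cheap dyadic cover of it can be propagated by compactness of $L$ to a one-sided slab -- is exactly the engine of the paper's proof of left-continuity. You differ in two respects. First, you treat both one-sided limits symmetrically by reducing everything to the gap set $L\cap\{u<x_1\le v\}$ (resp. $\{v<x_1\le u\}$) and bounding its premeasure by the slice cover; the paper instead handles right-continuity by a different device, taking a near-optimal cover of all of $L\cap\{x_1\le u_0\}$ and observing that, by compactness, it still covers $L\cap\{x_1\le u_0+\eta_2\}$ for small $\eta_2$. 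Both routes work; yours is more uniform, the paper's avoids re-invoking the null slice on the right. Second, your fattening/re-tiling step (replacing each $C_i$ by a concentric open cube and paying a factor $3^d$) addresses a genuine subtlety that the paper's ``by compactness, these dyadic cubes also cover\dots'' glosses over: the cubes of $\mathcal N$ are half-open, so a finite or countable union of them containing a compact set need not contain a neighbourhood of it, and the sequential-compactness argument requires an honest open set. Your version is therefore slightly longer but more airtight on this point; the extra constant $3^d$ is harmless since $\veps$ is arbitrary.
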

\begin{proof}
 Fix $u_0\in\mathbb R$, $\veps>0$ and observe that $\M^\alpha_{2^{-k}}(L\cap V_{u_0})=0$. Thus we may find a finite
covering $(C_i)$ of $L\cap\{x_1=u_0\}$ by dyadic cubes of size less than $2^{-k}$ satisfying
$$\sum_i |C_i|^\alpha<\veps.$$
By compactness, these dyadic cubes also cover $L\cap\{u_0-\eta_1<x<u_0\}$ for some $\eta_1>0$. This yields easily, for
any $u\in[u_0-\eta_1,u_0)$, $g(u_0)-\veps \leq g(u)\leq g(u_0)$. On the other hand, let $(B_i)$ be a covering
of $L\cap\{x_1\leq u_0\}$ by dyadic cubes of size less than $2^{-k}$ satisfying
$$\sum_i |B_i|^\alpha<g(u_0)+\veps.$$
These cubes also cover $L\cap\{x\leq u_0+\eta_2\}$ for some $\eta_2>0$, so that $g(u_0)\leq g(u)\leq g(u_0)+\veps$
provided $u\in[u_0,u_0+\eta_2]$.
\end{proof}
We need to introduce the following notation.
For a dyadic cube $I=\prod_j [a_j,a_j+2^{-k})$ of size $2^{-k}$ and for $u\geq 0$, we shall denote by 
$$I_{|u}=I\cap\left([a_1,a_1+u]\times\prod_{j=2}^d [a_j,a_j+2^{-k})\right)=I\cap\{x\in\mathbb R^d;\ x_1\leq a_1+u\}.$$
$I_{|u}$ should be thought as the beginning of $I$. Observe that $I_{|0}$ is a face of the cube
and that $I_{|2^{-k}}=I$. Observe also that, if $J$ is a dyadic cube contained in $I$, then either $J\cap I_{|u}$
is empty or $J\cap I_u$ is equal to some $J_{|v}$.

\smallskip

We now prove Theorem \ref{THMBESICO} with $\alpha$ belonging only to $\Gamma(K)$. We first observe that, 
for any $\alpha\in\Gamma(K)$, $\M^\alpha_1(E)\in(0,+\infty)$. We shall build by induction on $k\geq 0$ 
compact sets $E_\alpha^k$, $\alpha\in\Gamma(K)$, satisfying the following properties:
\begin{itemize}
 \item[\bf (A)] For any $\alpha,\beta\in\Gamma(K)$ with $\alpha\leq\beta$, $E_\alpha^k\subset E_\beta^k$;
\item[\bf (B)] For any dyadic cube $I$ of size $2^{-(k-1)}$, for any $\alpha\in\Gamma(K)$, 
either one may find $u\geq 0$ such that $E_\alpha^k\cap I=K\cap I_{|u}$ or $E_\alpha^k\cap I=\varnothing$;
\item[\bf (C)] For any $\alpha\in\Gamma(K)$, $E_\alpha^{k+1}\subset E_\alpha^k$;
\item[\bf (D)] For any dyadic cube $I$ of size $2^{-(k-1)}$, for any $\alpha\in\Gamma(K)$, 
$$\M_{2^{-k}}^\alpha(E_\alpha^k\cap I)=\M_{2^{-(k-1)}}^\alpha(E_\alpha^{k-1}\cap I)\leq 2^{-\alpha(k-1)}.$$
\end{itemize}
Properties {\bf(A)} and {\bf(B)} are those which are new. Property {\bf(B)} implies that the same result holds automatically
for any dyadic cube $J$ contained in $I$.

\smallskip

We initialize the construction by setting $E_\alpha^0=K$, for any $\alpha\in\Gamma(K)$. Suppose that the sets $E_\alpha^k$
have been constructed and let us show how to construct $E_\alpha^{k+1}$. Let $\alpha\in\Gamma(K)$. We define $E_\alpha^{k+1}$
by specifying its intersection with a dyadic cube of size $2^{-k}$. Let $I$ be such a cube and let us consider the two following cases:
\begin{itemize}
 \item if $\M^\alpha_{2^{-(k+1)}}(E_\alpha^k\cap I)\leq 2^{-\alpha k}$, then we set $E_\alpha^{k+1}\cap I=E_\alpha^k\cap I$. 
In that case, properties {\bf(B)} and {\bf(D)} are easily satisfied. For {\bf(D)}, we shall observe that 
$\M^\alpha_{2^{-k}}(E_\alpha^k\cap I)=\M^\alpha_{2^{-(k+1)}}(E_k^\alpha\cap I)$ because using a dyadic cube of size $2^{-k}$ 
gives an estimate at least as large as $2^{-\alpha k}$.
\item if $\M^\alpha_{2^{-(k+1)}}(E_\alpha^k\cap I)>2^{-\alpha k}$, then we use the assumption $\alpha\in\Gamma(K)$. It implies that $\M^{\alpha}_{2^{-(k+1)}}(E_\alpha^k\cap I_{|0})=0$
and that the map $u\mapsto \M^{\alpha}_{2^{-(k+1)}}(E_\alpha^k\cap I_{|u})$ is continuous. Thus there exists a \emph{biggest} posive real number $u$ such that $\M^\alpha_{2^{-(k+1)}}(E_\alpha^k\cap I_{|u})=2^{-\alpha k}$. 
We set $E_\alpha^{k+1}\cap I=E_\alpha^k\cap I_{|u}$. Again, {\bf (B)} and {\bf (D)} are satisfied since $\M_{2^{-k}}^\alpha(E_\alpha^k\cap I)=2^{-\alpha k}.$
\end{itemize}
From the construction, {\bf(C)} is clear. It remains to prove {\bf(A)}. Let $\alpha,\beta\in \Gamma(K)$ with $\alpha\leq\beta$ and let $I$ be a dyadic cube of size $2^{-k}$.
On the one hand, if $E_\beta^{k+1}\cap I=E_\beta^k\cap I$, then the induction hypothesis ensures that $E_{\alpha}^{k+1}\cap I\subset E_{\beta}^{k+1}\cap I$.
On the other hand, suppose that $E_\beta^{k+1}\cap I$ is not equal to $E_\beta^k\cap I$. We may assume that $E_\alpha^{k+1}\cap I\neq\varnothing$
(otherwise there is nothing to prove). Then we write
$$E_\beta^{k+1}\cap I=K\cap I_{|u},\ E_\alpha^{k+1}\cap I=K\cap I_{|v},$$
where $u$ is the biggest real number with that property.
Suppose that $v>u$. Then, by maximality of $u$ and by Lemma \ref{LEMNET1},
$$\M_{2^{-(k+1)}}^\alpha(E_\alpha^{k+1}\cap I)\geq \left(\M_{2^{-(k+1)}}^\beta(K\cap I_{|v})\right)^{\alpha/\beta}>2^{\alpha k},$$
a contradiction with {\bf (D)}. Hence, $E_\alpha^{k+1}\subset E_\beta^{k+1}$ for every dyadic cube of size $2^{-k}$, showing {\bf (A)}.

\smallskip

We finally set $E_\alpha=\bigcap_k E_\alpha^k$. The arguments of \cite[Thm 5.4]{Fal85} show that $0<\H^\alpha(E_\alpha)<+\infty$. Moreover,
it follows from {\bf (A)} that $E_\alpha\subset E_\beta$
for any $\alpha\leq\beta$, $\alpha,\beta\in \Gamma(K)$.

\medskip

We now prove the full statement of Theorem \ref{THMBESICO} and we proceed by induction on $d$. The case $d=1$ has already been handled since (\ref{EQBESICO1})
is always true for any $\alpha>0$. To prove the statement for any value of $d$, let $t=\inf \Gamma(K)$ ($t=s$ provided $\Gamma(K)$ is empty, this can happen if $s\leq d-1$). We first assume that
$t\in\Gamma(K)$. We already know that there
exists a sequence $(F_\alpha)_{\alpha\in[t,s)}$ with $\H^\alpha(F_\alpha)\in(0,+\infty)$ and $F_\alpha\subset F_\beta$ provided $\alpha\leq\beta$. 

Let now $(\alpha_k)_{k\geq 1}$ be a sequence in $(0,t)$ increasing to $t$ and let $a_k\in\mathbb R$ be such that $\H^{\alpha_k}(K\cap V_{a_k})>0$
for any $k\geq 1$. By compactness of $K$, $(a_k)$ is bounded and we may assume that $(a_k)$ converges to some $a\in\RR$ and we set $\alpha_0=0$, $E_0=\varnothing$. The induction hypothesis
applied to $K\cap V_{a_k}$, which is contained in $\RR^{d-1}$,  gives us for every $k\geq 1$ an increasing sequence $(F_\alpha^k)_{0<\alpha\leq \alpha_k}$
of compact subsets of $K\cap V_{a_k}$ satisfying $\H^{\alpha}(F_\alpha^k)\in(0,+\infty)$ (we may include $\alpha=\alpha_k$
by looking first at a compact subset $K_k$ of $K\cap V_{a_k}$ with $\H^{\alpha_k}(K_k)\in(0,+\infty)$).

We define the sets $E_\alpha$, for $0<\alpha<t$, in the following way. Let $k$ be such that $\alpha$ belongs to $(\alpha_{k-1},\alpha_k]$.
We set (inductively) $E_\alpha=F_\alpha^k\cup E_{\alpha_{k-1}}$. It is worth noting that 
$$E_\alpha\subset K\cap \left(\bigcup_k V_{a_k}\cup V_a\right)=:W,$$
which is a compact subset of $K$. The sets $F_\alpha$, for $\alpha\geq t$, do not necessarily contain $E_\beta$, for $\beta<t$. This leads us to set,
for $\alpha\geq t$, $E_\alpha=F_\alpha\cup W$. This does not affect their $\alpha$-dimensional Hausdorff measure, since
$$\H^t(W)\leq\sum_k \H^t(K\cap V_{a_k})+\H^t(K\cap V_a)=0.$$

We finally assume that $t\notin \Gamma(K)$, so that $\H^t(K\cap V_a)>0$ for some $a\in\RR$. We apply the induction hypothesis to $K\cap V_a$ to obtain an increasing family $(E_\alpha)_{0<\alpha\leq t}$ of compact
sets satisfying $0<\H^\alpha(E_\alpha)<+\infty$ for any $\alpha\in(0,t]$. We also know how to construct a similar family $(F_\alpha)_{t<\alpha<s}$. We just modify slightly this last family
by setting $E_\alpha=F_\alpha\cup E_t$ for $\alpha>t$. The family $(E_\alpha)_{0<\alpha<s}$ satisfies all the requirements of Theorem \ref{THMBESICO}.

\begin{remark}
 The proof of Theorem \ref{THMBESICO} is really specific to the net structure. We do not know whether a similar result does exist on general metric spaces.
\end{remark}

\subsection{Sets with prescribed packing dimension}
Let $K$ be a compact set with packing dimension $s$. A result of H. Joyce and D. Preiss (\cite{JP95}) ensures that, for any $\alpha\in(0,s)$,
there exists $E_\alpha\subset K$ compact with $0<\P^\alpha(E_\alpha)<+\infty$. We would like to find an increasing family $(E_\alpha)_{0<\alpha<s}$.
We do not know whether this is possible in full generality. However, let us show how to construct such a family in the context of self-similar compact sets.

We recall that a compact subset $K$ of $\RR^d$ is called \emph{self-similar} if there are finitely many contractive similarity maps $S_1,\dots,S_m:\RR^d\to\RR^d$
such that $K=\bigcup_i S_i(K)$. We denote by $r_1,\dots,r_m$ the ratios of these maps. $K$ is said to satisfy the \emph{Open Set Condition} if there is an open, non-empty and bounded subset $\mathcal O$ of $\mathbb R^d$
such that $S_i(\mathcal O)\subset \mathcal O$ for all $i$ and $S_i(\mathcal O)\cap S_j(\mathcal O)=\varnothing$ for all $i\neq j$.

\begin{proposition}
 Let $K$ be a self-similar compact subset of $\RR^d$ satisfying the Open Set Condition. Let $s=\dimp(K)=\dimh(K)$. There exists a family $(E_\alpha)_{0<\alpha<s}$ 
of compact subsets of $\RR^d$ satisfying
$$\forall \alpha\in(0,s),\ \H^\alpha(E_\alpha)>0\textrm{ and }\dimp(E_\alpha)\leq\alpha.$$
\end{proposition}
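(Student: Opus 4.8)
The plan is to realise each $E_\alpha$ as a Moran-type subset cut out of the natural tree of cylinders of $K$, with the branching tuned so that both the Hausdorff and the packing side are governed by $\alpha$. First I would set up the symbolic coding: writing $K=\bigcup_i S_i(K)$ with ratios $r_i$ and $\sum_i r_i^s=1$, for a finite word $w=(w_1,\dots,w_n)$ put $K_w=S_{w_1}\circ\cdots\circ S_{w_n}(K)$ and $r_w=r_{w_1}\cdots r_{w_n}$, so that $|K_w|=r_w|K|$. Two consequences of the Open Set Condition will be used repeatedly: the antichain identity $\sum_{w\in A}r_w^s=1$ for every finite maximal antichain $A$ (immediate by refinement from $\sum_i r_i^s=1$), and the bounded-overlap property that a ball of radius $\rho$ meets at most a constant number of cylinders $K_w$ of diameter comparable to $\rho$.

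Next I would fix a rapidly shrinking sequence of scales $1=\rho_0>\rho_1>\cdots$, say $\rho_n/\rho_{n-1}=\theta_n$ with $\theta_n=2^{-n}$, and define the cut-sets $\Lambda_n=\{w:\ r_w\le\rho_n<r_{w^-}\}$. These are finite maximal antichains; the hierarchy $(\Lambda_n)$ is a tree in which every node $w\in\Lambda_{n-1}$ has finitely many children $\mathcal C(w)\subset\Lambda_n$ with child ratios $c_{w,w'}=r_{w'}/r_w\in(0,1)$ bounded away from $1$, and $\sum_{w'\in\mathcal C(w)}c_{w,w'}^s=1$. I would then prune this tree: fix once and for all an ordering of $\mathcal C(w)$ and let $\mathcal C_\alpha(w)$ be the shortest initial segment for which $\sum_{w'}c_{w,w'}^\alpha\ge 1$ — such a segment exists because $c<1$ and $\alpha<s$ force $\sum_{\text{all}}c^\alpha\ge\sum_{\text{all}}c^s=1$. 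Keeping only the cylinders reached through surviving children defines nested compact sets $F_n^\alpha=\bigcup_{\text{surviving }w\in\Lambda_n}K_w$ and $E_\alpha=\bigcap_n F_n^\alpha$. Monotonicity in $\alpha$ is built in: since $c^\alpha\ge c^\beta$ for $\alpha\le\beta$, every partial $\alpha$-sum dominates the corresponding $\beta$-sum, so the greedy stopping index for $\alpha$ is at most that for $\beta$, whence $\mathcal C_\alpha(w)\subseteq\mathcal C_\beta(w)$ and $E_\alpha\subseteq E_\beta$.

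The heart of the argument is a measure estimate. I would distribute mass by $\nu_\alpha(K_{w'})=\nu_\alpha(K_w)\,c_{w,w'}^\alpha/\Sigma_\alpha(w)$, where $\Sigma_\alpha(w)=\sum_{w'\in\mathcal C_\alpha(w)}c_{w,w'}^\alpha$, so that telescoping along the path to $w$ gives $\nu_\alpha(K_w)=r_w^\alpha/\prod\Sigma_\alpha$. The greedy rule yields $\Sigma_\alpha(w)\in[1,1+c_{\max}^\alpha)$ with a per-level overshoot that tends to $1$ because $c_{\max}\le\theta_n/r_{\min}\to 0$; the key point — and the reason for letting the scale ratios vanish — is that $\sum_n(\theta_n/r_{\min})^\alpha<\infty$, so the product $\prod_n\Sigma_\alpha$ converges and $\nu_\alpha(K_w)\asymp r_w^\alpha\asymp\rho_n^\alpha$ with two-sided constants. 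The upper bound $\nu_\alpha(K_w)\le r_w^\alpha$ together with bounded overlap gives $\nu_\alpha\big(B(x,r)\big)\le Cr^\alpha$, hence $\H^\alpha(E_\alpha)>0$ by the mass distribution principle. The lower bound $\nu_\alpha(K_w)\ge c\,r_w^\alpha$ shows that for every $x\in E_\alpha$ the ball $B(x,r)$ swallows the surviving cylinder through $x$ at the scale $\approx r$, which carries $\nu_\alpha$-mass $\gtrsim\rho_n^\alpha$; comparing with $\log r$ and using $\log(1/\theta_n)=o\big(\log(1/\rho_{n-1})\big)$ forces $\dsuploc(\nu_\alpha;x)\le\alpha$ for all $x\in E_\alpha$, and the standard packing criterion then gives $\dimp(E_\alpha)\le\alpha$.

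I expect the main obstacle to be pinning the packing dimension down to exactly $\alpha$ rather than $\alpha+\veps$: a uniform geometric scale ratio only yields $\dboxsup(E_\alpha)\le\alpha+\log 2/\log(1/\theta)$, because each greedy stop overshoots the target $\sum c^\alpha=1$ by a fixed amount. Forcing the overshoot to decay — by subdividing on finer and finer scales $\theta_n\to 0$ while keeping $\log(1/\theta_n)=o\big(\sum_{j\le n}\log(1/\theta_j)\big)$ — is precisely what makes $\prod_n\Sigma_\alpha$ converge and delivers the exact bound. The remaining points (the antichain and bounded-overlap lemmas, compactness of the nested intersection $\bigcap_n F_n^\alpha$, and the intermediate-scale interpolation in the local-dimension estimate) are routine consequences of the Open Set Condition.
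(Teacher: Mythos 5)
Your route is genuinely different from the paper's: the paper avoids all Moran-type estimates by taking $E_\alpha=K\big(C(g(\alpha))\big)$ for a nested family of compact convex subsets of the simplex of digit frequencies, and then quotes Ma--Wen--Wu (positivity of $\H^{\Lambda(\bfp)}$ on a single-frequency Besicovitch set) and Olsen--Winter ($\dimp K(C)=\sup_{\bfp\in C}\Lambda(\bfp)$). Within your construction, the packing half is sound: the two-sided estimate $\nu_\alpha(K_w)\asymp r_w^\alpha$ at the distinguished scales, combined with $\log(1/\theta_n)=o\big(\log(1/\rho_{n-1})\big)$, does give $\dsuploc(\nu_\alpha;x)\le\alpha$ on $E_\alpha$ and hence $\dimp(E_\alpha)\le\alpha$, and the monotonicity of the greedy selection in $\alpha$ is correct.

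The gap is in the step ``$\nu_\alpha(K_w)\le r_w^\alpha$ together with bounded overlap gives $\nu_\alpha\big(B(x,r)\big)\le Cr^\alpha$''. The inequality $\nu_\alpha(K_w)\le r_w^\alpha$ is only available for $w$ in one of the cut-sets $\Lambda_n$. For a word $v$ strictly between $\Lambda_{n-1}$ and $\Lambda_n$, with ancestor $v_0\in\Lambda_{n-1}$, you only control $\nu_\alpha(K_v)\le\nu_\alpha(K_{v_0})\approx\rho_{n-1}^\alpha$, which exceeds $r_v^\alpha$ by a factor as large as $\theta_n^{-\alpha}$, and this factor is unbounded precisely because you let $\theta_n\to0$. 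Bounded overlap cannot repair this, because nothing in a greedy initial segment of an arbitrary ordering prevents the $\approx\theta_n^{-\alpha}$ surviving children of $v_0$ from clustering inside a single ball of radius $\rho\approx\rho_n\theta_n^{-\alpha/s}\ll\rho_{n-1}$; such a ball then carries mass $\approx\rho_{n-1}^\alpha\gg\rho^\alpha$, so the hypothesis of the mass distribution principle fails at intermediate scales. Worse, for an adversarial ordering one genuinely gets $\H^\alpha(E_\alpha)=0$: covering $E_\alpha$ by one such cluster ball per surviving $v_0$ yields $\sum\rho^\alpha\approx\theta_n^{\alpha(1-\alpha/s)}\to0$. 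There is a structural tension here: a constant ratio $\theta$ bounds the intermediate-scale loss and saves the Hausdorff estimate but, as you yourself note, ruins the exact packing bound, while $\theta_n\to0$ saves packing but opens arbitrarily long scale gaps in which the measure is uncontrolled. To close the gap you must select the surviving descendants in a spatially uniform way --- e.g.\ prune one IFS generation at a time, keeping in each intermediate sub-cylinder a number of $\Lambda_n$-descendants proportional to its $s$-dimensional share, so that $\nu_\alpha(K_v)\le Cr_v^\alpha$ holds for \emph{every} word $v$ --- and then re-verify that this finer pruning is still compatible with the vanishing overshoot needed on the packing side. That verification is the real content of the proposition and is not a routine consequence of the Open Set Condition.
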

\begin{proof}
 We shall use two results regarding Besicovitch subsets of self-similar compact sets. We need to introduce notations. Let $\Sigma=\{1,\dots,m\}^\NN$ be the symbolic space
and let $\Sigma^*=\bigcup_n\{1,\dots,m\}^n$ be the family of all finite lists with entries in $\{1,\dots,m\}$. For $\bfi \in\Sigma$ and a positive
integer $n$, let $\bfi |n$ denote the truncation of $\bfi $ to the $n$-th place. Finally, for $\bfi=i_1\dots i_n\in\Sigma^*$, we write $S_{\bfi }=S_{i_1}\circ\dots\circ S_{i_n}$
and $K_{\bfi }=S_{\bfi }K$. We define a map $\pi:\Sigma\to K$ by $\{\pi(\bfi )\}=\bigcap_n K_{\bfi|n}$.

We then define $m$ maps $\Pi_i:\Sigma^*\to\RR$ by 
$$\Pi_i(\bfi)=\frac{|\{1\leq j\leq n;\ i_j=i\}|}n,$$
namely $\Pi_i(\bfi)$ gives the frequency of the digit $i$ in $\bfi$. Let $\Pi=(\Pi_1,\dots,\Pi_m)$ be the vector of the frequencies of the digits.
For $\bfi\in\Sigma$, let $A(\bfi)$ denote the set of accumulation points of $(\Pi(\bfi|n))_n$. $A(\bfi)$ is contained in $\Delta$, the simplex of the probability
vectors in $\RR^m$, namely $\Delta=\{\bfp=(p_1,\dots,p_m)\in\RR^m;\ p_j\geq 0\textrm{ and }\sum_j p_j=1\}$.

One can control the dimension of the set of points $\Pi(\bfi)$, $i\in\Sigma$, such that $A(\bfi)$ belongs to a fixed subset of $\Delta$. This involves the notion of entropy.
Define $\Lambda:\Delta\to \RR$ by 
$$\Lambda(\bfp)=\frac{\sum_j p_j\log p_j}{\sum_j p_j\log r_j}.$$
In particular, since $s$ satisfies $r_1^s+\dots+r_m^s=1$, 
$$\Lambda(r_1^s,\dots,r_m^s)=s.$$
For $C$ a subset of $\Delta$, we define
$$K(C)=\pi\big(\{\bfi\in\Sigma;\ A(\bfi)\subset C\}\big)\subset K.$$
The first result that we need is due to J-H. Ma, Z-Y. Wen and J. Wu in \cite{MWW02}.
$$\textrm{For any }\bfp\in\Delta,\ \bfp\neq(r_1^s,\dots,r_m^s),\ \H^{\Lambda(\bfp)}\big(K(\{p_1,\dots,p_m\})\big)>0.$$
The second result that we need is due to L. Olsen and S. Winter in \cite{OW03}.
$$\textrm{For any compact and convex set }C\subset\Delta,\ \dimp\big(K(C)\big)=\sup_{\bfp\in C}\Lambda(\bfp).$$

\medskip

For any $\lambda\in[0,1]$, we define
\begin{eqnarray*}
 C(\lambda)&=&\{\bfp\in\Delta;\ 0\leq p_j\leq\lambda r_j^s\textrm{ for any }j=1,\dots,m-1\}\\
f(\lambda)&=&\max_{\bfp\in C(\lambda)}\Lambda(\bfp).
\end{eqnarray*}
$f$ is continuous, nondecreasing and satisfies $f(0)=0$, $f(1)=s$. For any $\alpha\in(0,s)$, we define $g(\alpha)=\sup\{\lambda\in[0,1];\ f(\lambda)=\alpha\}$.
$g$ is a right-inverse of $f$ and it is increasing. Finally, let us set $E_\alpha=K\big(C(g(\alpha))\big)$. The family $(E_\alpha)_{0<\alpha<s}$ 
is increasing and $\dimp(E_\alpha)=f\circ g(\alpha)=\alpha$. Moreover, let $\bfp\in C(g(\alpha))$ be such that $\Lambda(\bfp)=\alpha$. Then 
$$\H^\alpha\big(E_\alpha\big)\geq \H^\alpha\big(K(\{p_{1},\dots,p_m\})\big)>0.$$
\end{proof}

\section{Multifractal measures - the Hausdorff case}
\subsection{At a fixed level.}
To construct multifractal measures on $K$, we begin by fixing some compact subset $E$ of $K$  with $\dimh E=\alpha$
and by building plenty of measures $\mu\in\pk$ so
that $E\subset \Emoinsmualpha$. This is reminiscent from Lemma 3.5 of \cite{Cut95} but our result is more precise
(and the method of proof, which is inspired by \cite{BuSe10}, is different). Without loss of generality, we may assume that $K$ is infinite.
\begin{proposition}\label{PROPHAUSDORFF}
Let $E$ be a compact subset of $K$ and let $\alpha>0$. Assume that $\H^\alpha(E)<+\infty$. Then there exists a dense $G_\delta$-subset $\R_E$ in $\pk$
such that, for any $\mu\in\R_E$, $E\subset\Emoinsmualpha$.
\end{proposition}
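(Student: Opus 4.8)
The plan is to realize $\R_E$ as a countable intersection of open dense subsets of the complete metric space $(\pk,L)$ and to conclude by the Baire category theorem. The first step is to rewrite the defining condition with only countably many quantifiers. Since $\log r\to-\infty$ as $r\to 0$, for $r$ small the inequality $\frac{\log\mu(B(x,r))}{\log r}<\alpha+\frac1j$ is equivalent to $\mu\big(B(x,r)\big)>r^{\alpha+1/j}$, and $\dinflocmux\le\alpha$ holds precisely when for every $j,n\in\NN$ there is some $r\in(0,1/n)$ with $\mu\big(B(x,r)\big)>r^{\alpha+1/j}$. I would therefore set, using open balls,
$$\mathcal O_{j,n}=\Big\{\mu\in\pk:\ \forall x\in E,\ \exists r\in(0,1/n),\ \mu\big(B(x,r)\big)>r^{\alpha+1/j}\Big\},$$
and define $\R_E=\bigcap_{j,n}\mathcal O_{j,n}$. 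If $\mu\in\R_E$, then for each fixed $x\in E$ and each $j$ there are arbitrarily small radii $r$ with $\frac{\log\mu(B(x,r))}{\log r}<\alpha+\frac1j$, whence $\dinflocmux\le\alpha+\frac1j$; letting $j\to\infty$ gives $x\in\Emoinsmualpha$. Thus $\R_E$ has the required property, and it remains only to prove that each $\mathcal O_{j,n}$ is open and dense.

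For density I would argue by a direct perturbation, and this is where the hypothesis $\H^\alpha(E)<+\infty$ enters. Fix $j,n$, a target measure $\nu\in\pk$, and $\delta>0$. For $\eta>0$ small (with $2\eta<1/n$), finiteness of $\H^\alpha(E)$ together with compactness of $E$ yields a finite cover of $E$ by open balls $B(x_i,\rho_i)$, $x_i\in E$, $\rho_i<\eta$, with $\sum_i\rho_i^\alpha\le M$ for a constant $M$ depending only on $E$. I would then place atoms of mass $m_i=2(2\rho_i)^{\alpha+1/j}$ at the centers $x_i$ and set $\mu=(1-\tau)\nu+\sum_i m_i\delta_{x_i}$, where $\tau=\sum_i m_i$; this is a probability measure on $K$. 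The crucial gain is the extra factor $\rho_i^{1/j}$: one has $\tau\le 2\cdot2^{\alpha+1/j}\eta^{1/j}\sum_i\rho_i^\alpha\le C\eta^{1/j}M$, which tends to $0$ as $\eta\to0$, so $\eta$ can be chosen with $\tau<\delta/2$. Since $|f|\le1$ for $f\in\mathrm{Lip}(K)$, a one-line estimate gives $L(\mu,\nu)\le 2\tau<\delta$. Finally, any $x\in E$ lies in some $B(x_i,\rho_i)$, so $x_i\in B(x,2\rho_i)$ and, with $r=2\rho_i\in(0,1/n)$, $\mu\big(B(x,r)\big)\ge m_i>r^{\alpha+1/j}$; hence $\mu\in\mathcal O_{j,n}$.

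The main obstacle is openness, because the quantifier ``$\forall x\in E$'' ranges over an uncountable set and $\mu\mapsto\mu\big(B(x,r)\big)$ is only lower semicontinuous. I would resolve the uniformity issue by reducing, via compactness, to a \emph{finite} family of fixed balls. Let $\mu_0\in\mathcal O_{j,n}$; for each $x\in E$ pick $r_x\in(0,1/n)$ with $\mu_0\big(B(x,r_x)\big)>r_x^{\alpha+1/j}$, and choose $\eta_x>0$ so small that $r_x+\eta_x<1/n$ and $(r_x+\eta_x)^{\alpha+1/j}$ is still strictly below $\mu_0\big(B(x,r_x)\big)$. Extract a finite subcover $E\subset\bigcup_{k=1}^N B(x_k,\eta_k)$. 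The key point is the inclusion $B(x_k,r_{x_k})\subseteq B(x',r_{x_k}+\eta_k)$ valid for every $x'\in B(x_k,\eta_k)$: it lets me replace the moving ball $B(x',\cdot)$ by the fixed ball $B(x_k,r_{x_k})$. Consequently the open set
$$\bigcap_{k=1}^N\Big\{\mu\in\pk:\ \mu\big(B(x_k,r_{x_k})\big)>(r_{x_k}+\eta_k)^{\alpha+1/j}\Big\}$$
is a neighborhood of $\mu_0$ (each condition is open by lower semicontinuity of $\mu\mapsto\mu(U)$ for open $U$, and $\mu_0$ satisfies it with a strict margin) and is contained in $\mathcal O_{j,n}$, since for any $x'\in E$, choosing $k$ with $x'\in B(x_k,\eta_k)$ and $r=r_{x_k}+\eta_k\in(0,1/n)$ gives $\mu\big(B(x',r)\big)\ge\mu\big(B(x_k,r_{x_k})\big)>r^{\alpha+1/j}$. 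This proves $\mathcal O_{j,n}$ open.

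With both properties established, $\R_E=\bigcap_{j,n}\mathcal O_{j,n}$ is a dense $G_\delta$ subset of $\pk$ by the Baire category theorem, which completes the proof. I expect the finite-subcover reduction in the openness step to be the delicate part; everything else is a routine perturbation and semicontinuity argument. (If one prefers to avoid invoking lower semicontinuity directly, the same neighborhoods can be produced from Lemma~\ref{LEMTOPO1} by estimating $\mu\big(B(x_k,r_{x_k})\big)$ from below through $\mu_0$ on a slightly shrunk ball.)
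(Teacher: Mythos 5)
Your proof is correct, but it takes a genuinely different route from the paper's. You describe the target set intrinsically, as $\bigcap_{j,n}\mathcal O_{j,n}$ with $\mathcal O_{j,n}=\{\mu:\ \forall x\in E,\ \exists r\in(0,1/n),\ \mu(B(x,r))>r^{\alpha+1/j}\}$, and then prove each $\mathcal O_{j,n}$ open (finite subcover of $E$ plus lower semicontinuity of $\mu\mapsto\mu(U)$ for $U$ open) and dense (a one-scale atomic perturbation whose total mass $\tau\lesssim\eta^{1/j}\H^\alpha(E)$ is forced to be small by the relaxed exponent $\alpha+1/j$). The paper instead first reduces to the case $\H^\alpha(E)=0$ (using $\H^\beta(E)=0$ for every $\beta>\alpha$ and intersecting over $\beta_k\downarrow\alpha$ at the end), builds a single auxiliary measure $\mu_0$ encoding efficient covers of $E$ at \emph{all} scales simultaneously, mixes it with each finitely supported $\nu$ to obtain a dense family $\{\mu_\nu\}$, and defines $\R_E=\bigcap_{l}\bigcup_{\nu}B_L(\mu_\nu,\eta_{\nu,l})$ with radii supplied by Lemma~\ref{LEMTOPO1}; openness is then automatic because the set is a union of balls, so no semicontinuity or finite-subcover argument is needed. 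Your route buys a slightly cleaner conclusion --- your $\R_E$ is exactly $\{\mu:\ E\subset\Emoinsmualpha\}$, so you show that this set is itself a dense $G_\delta$ rather than merely containing one --- and it uses $\H^\alpha(E)<+\infty$ directly at a single scale instead of through the limit $\beta\downarrow\alpha$; the paper's route reuses the same $\{\mu_\nu\}$/Lemma~\ref{LEMTOPO1} machinery elsewhere and sidesteps openness entirely. The only points worth spelling out in your version are (i) that the finite cover by balls centered in $E$ with $\sum_i\rho_i^\alpha\le M$ uniformly in $\eta$ does follow from $\H^\alpha_\eta(E)\le\H^\alpha(E)<+\infty$ after replacing covering sets by balls (a dimensional factor such as $2^\alpha$ enters, and sets of zero diameter require inflating the radius by a small $\epsilon_i$), and (ii) that $\{\mu:\ \mu(U)>c\}$ is weakly open for $U$ open, which is the portmanteau theorem; alternatively, as you note, Lemma~\ref{LEMTOPO1} yields the same neighborhoods.
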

\begin{proof}
We first assume that $\H^\alpha(E)=0$. Then for any $n\geq 1$, we can find a set $\mathcal B_n$ of balls with diameters less than $2^{-n}$, 
covering $E$, and such that
$$\sum_{B\in\mathcal B_n}|B|^\alpha\leq 2^{-(n+1)}.$$
We may assume that $\mathcal B_n$ is finite and that every ball in $\mathcal B_n$ does intersect $E$. Let $\rho_n$ be the minimum of the diameters of the balls in $\mathcal B_n$. We set $\mathcal C_n=\bigcup_{B\in\mathcal B_n}B$.
 Let also $(\omega_n)$ be an increasing sequence of positive real numbers going to $+\infty$ and such that 
 $$\sum_{n\geq 1}\omega_n\sum_{B\in\mathcal B_n}|B|^\alpha=1.$$
For any $n\geq 1$ and any ball $B\in\mathcal B_n$, we pick a point $x_B\in K\cap B$. We then set 
 $$\mu_0=\sum_{n\geq 1}\omega_n\sum_{B\in\mathcal B_n}|B|^\alpha \delta_{x_B}.$$
 Let now $\nu\in\fk$ and let $N$ be the number of elements in the support of
 $\nu$. We define
 $$\mu_\nu=\left(1-\frac1N\right)\nu+\frac 1N\mu_0.$$
 Since $K$ is infinite, $\{\mu_\nu\}_{\nu\in\fk}$ is dense in $\pk$. Let now $l\geq 1$ and let $n\geq l$. For any $y\in\mathcal C_n$,
 we may find a ball $B\in\mathcal B_n$ with radius $r\in[\rho_n,2^{-n}]$ so that $y\in B$. This in turn implies $x_B\in B(y,2r)$. Thus
 \begin{eqnarray*}
 \mu_\nu\big(B(y,2r)\big)&\geq&\frac 1N\mu_0\big(B(y,2r)\big)\\
 &\geq&\frac{\omega_n}Nr^\alpha\\
 &\geq&8^\alpha r^\alpha,
 \end{eqnarray*}
 provided $n$ is large enough. We shall denote this integer by $n_{\nu,l}$. By Lemma \ref{LEMTOPO1} applied with the parameters
$\gamma=2r$ and $\beta=4^\alpha\rho_{n_{\nu,l}}$, one can find $\eta_{\nu,l}>0$ such that 
 any $\mu\in\pk$ with $L(\mu,\mu_\nu)<\eta_{\nu,l}$ satisfies, for any $y\in \mathcal C_{n_{\nu,l}}$ and for the associated $r>0$,
 $$\mu\big(B(y,4r)\big)\geq \mu_{\nu}\big(B(y,2r)\big)-4^\alpha\rho_{n_{\nu,l}}\geq 4^\alpha r^\alpha.$$
 We finally consider the dense $G_\delta$-set 
 $$\R_E=\bigcap_{l\geq 1}\bigcup_{\mu_\nu\in\fk}B_L(\nu,\eta_{\nu,l}).$$
 Pick $\mu\in\R_E$. For any $l\geq 1$, there exists $\nu\in\fk$ such that $\mu\in B_L(\mu_\nu,\eta_{\nu,l})$. Any $y\in E$ belongs
 to $\mathcal C_{n_{\nu,l}}$ so that there exists $0<r\leq 2^{-n_{\nu,l}}\leq 2^{-l}$ with
 $$\mu\big(B(y,4r)\big)\geq 4^\alpha r^\alpha.$$ 
 This means that $E\subset\Emoinsmualpha$.
 
 \medskip
 
 Suppose now that we have the weaker assumption $\H^\alpha(E)<+\infty$. For any $\beta>\alpha$, $\H^\beta(E)=0$ and the first part tells us that
 we can find a residual subset $\R_\beta\subset\pk$ such that, for any $\mu\in\R_\beta$, $\mathcal E_-(\mu;\beta)\supset E$. Let $(\beta_k)$
 be a decreasing sequence of positive real numbers going to $\alpha$ and let $\R=\bigcap_k \R_{\beta_k}$. $\R$ keeps being a residual subset
 of $\pk$ and, taking the limit, $\Emoinsmualpha\supset E$.
\end{proof}

\subsection{Multifractal measures}
We now prove Theorem \ref{THMHAUSDORFF}. We start with $K$ a compact subset of $\mathbb R^d$ with $\dimh (K)=s$. Let $(E_\alpha)_{0<\alpha<s}$
be the family of sets given by Theorem \ref{THMBESICO}. We also pick $(\alpha_k)$ a dense sequence in $(0,s)$. For each $k$, Proposition \ref{PROPHAUSDORFF}
gives us a residual subset $\mathcal R_k$  such that 
$$\mu\in\mathcal R_k\implies E_{\alpha_k}\subset \E_-(\mu;\alpha_k).$$
We set $\mathcal R=\bigcap_k\mathcal R_k$ and we claim that $\mathcal R$ is the residual subset we are looking for. Indeed, let $\mu\in\R$ and let
$\alpha\in(0,s)$. We consider a subsequence $(\alpha_{\varphi(k)})$ decreasing to $\alpha$. Since, for any $k$,
$$E_\alpha\subset E_{\alpha_{\varphi(k)}}\subset \mathcal E_-(\mu;\alpha_{\varphi(k)}),$$
we get by taking the limit $E_\alpha\subset \Emoinsmualpha$. We then split $E_\alpha$ into two parts:
\begin{eqnarray*}
E_\alpha^1&=&\big\{x\in E_\alpha;\ \dinflocmux=\alpha\big\}\\
E_\alpha^2&=&\big\{x\in E_\alpha;\ \dinflocmux<\alpha\big\}.
\end{eqnarray*}
The crucial point is $\H^\alpha(E_\alpha^2)=0$. Indeed, $E_\alpha^2$ is contained in a countable union of sets $\E_-(\mu;\beta)$, $\beta<\alpha$, and these
sets satisfy $\H^\alpha\big(\E_-(\mu;\beta)\big)=0$. On the contrary, 
$\H^\alpha(E_\alpha)>0$. We then conclude that $\H^\alpha(E_\alpha^1)>0$ and $E_\alpha^1\subset\emoinsmualpha$. Hence, this last set has Hausdorff dimension equal to $\alpha$.

\medskip

When $\H^s(K)>0$, we can include $\alpha=\dimh(K)$ in the statement. Indeed, we can always assume that $0<\H^s(K)<+\infty$ and then we just set $E_s=K$. 
The rest of the proof is unchanged.

\section{Multifractal measures - the packing case}

\subsection{Obstructions to residuality of multifractal measures}
In this subsection, we prove Theorem \ref{THMPACKING1}. We first observe that it is enough to prove that, for any $s\in(0,\dboxsuploc(K))$, a typical measure
$\mu\in\pk$ satisfies $\dsuplocmux\geq s$ for any $x\in K$. Taking a sequence $(s_k)$ increasing to $\dboxsuploc(K)$ and the countable intersection
of residual sets, we will find the desired statement. Thus, let us fix $s\in(0,\dboxsuploc(K))$ and let $m>l\geq 1$. We define
$$\Ulm=\left\{\mu\in\pk;\ \forall x\in K,\ \exists r\in(1/m,1/l)\ \textrm{s.t.}\ \mu\big(B(x,r)\big)< r^s\right\}.$$
It is easy to check that any $\mu$ in $\bigcap_{l\geq 1}\bigcup_{m>l}\Ulm$ satisfies $\dsuplocmux\geq s$ for any $x\in K$. Hence, we just need
to prove the two following facts.
\begin{description}
\item[Fact 1] Each $\Ulm$ is open.
\item[Fact 2] For any $l\geq 1$, $\bigcup_{m> l}\Ulm$ is dense.
\end{description}
To prove Fact 1, we pick $\mu\in\Ulm$. Then, for any $x\in K$, we may find $r_x\in(1/m,1/l)$ such that 
$\mu\big(B(x,r_x)\big)<r_x^s$. Let $\veps_x\in(0,1)$ be small enough so that 
$$\left\{
\begin{array}{rcl}
(1-2\veps_x)r_x&>&1/m\\
\mu\big(B(x,r_x)\big)&<&(1-3\veps_x)^sr_x^s.
\end{array}\right.$$
We set $\eta_x=\veps_xr_x$. For any $y\in B(x,\eta_x)$, the ball $B\big(y,(1-\veps_x)r_x\big)$ is contained in $B(x,r_x)$ so that
$\mu\big(B(y,(1-\veps_x)r_x)\big)<(1-3\veps_x)r_x^s$. The compact set $K$ may be covered by a finite number of balls $B(x_i,\eta_i)$, $i=1,\dots,m$.
Let $\eta>0$ be given by Lemma \ref{LEMTOPO1} with $\gamma=\inf_i\veps_ir_i$ and 
$\beta=\inf_i\big((1-2\veps_i)^sr_i^s-(1-3\veps_i)^sr_i^s\big)$. Let $\nu\in\pk$ with $L(\mu,\nu)<\eta$ and let $y\in K$. Then $y$ belongs to some
$B(x_i,\eta_i)$ and
$$\nu\big(B(y,(1-2\veps_i)r_i)\big)\leq \mu\big(B(y,(1-\veps_i)r_i)\big)+\beta\leq (1-2\veps_i)^s r_i^s.$$
This shows that $\nu$ belongs to $\Ulm$ and that $\Ulm$ is open.

\smallskip

Let us now prove Fact 2. Let $l\geq 1$, $\mu=\sum_{i=1}^n p_i\delta_{x_i}\in\fk$ and $\eta>0$. We need to find $\nu\in\bigcup_{m>l}\Ulm$
satisfying $L(\mu,\nu)<\eta$. Let $\rho>0$ be such that
$$\rho<4\min(\|x_i-x_j\|;\ i\neq j)\textrm{ and } \rho<\eta.$$
For each $i\in\{1,\dots,n\}$, $\dboxsup\big(K\cap B(x_i,\rho)\big)>s$. This implies that we may find $N_i$ as large as we want and points $x_{i,1},\dots,x_{i,N_i}$
in $K\cap B(x_i,\rho)$ which are the centers of disjoint balls of radius $4N_i^{-s}$. We assume that the integers $N_i$ are sufficiently large so that 
the balls $B(x_i,\rho+4N_i^{-s})$ are pairwise disjoint and we also assume that $2\sup_iN_i^{-s}<1/l$. We set 
$$\mu_i=\frac{1}{N_i}\sum_{j=1}^{N_i}\delta_{x_{i,j}}\textrm{ and }\nu=\sum_{i=1}^n p_i\mu_i.$$
We claim that $\nu$ belongs to $\bigcup_{m>l}\Ulm\cap B_L(\mu,\eta)$. Indeed, for any $f\in\textrm{Lip}(K)$, 
$$\left|\int fd\mu-\int fd\nu\right|\leq \sum_i p_i \int |f-f(x_i)|d\mu_i\leq \rho<\eta.$$
On the other hand, let $m>l$ be such that $\frac1m<\inf_i N_i^{-s}$ and let $x\in K$. 
\begin{itemize}
\item either $x$ belongs to some $B(x_i,\rho+2N_i^{-s})$. We set $r=2N_i^{-s}\in (1/m,1/l)$. At most one point $x_{i,j}$, $j=1,\dots,N_i$, may belong to $B(x,r)$ since
$\|x_{i,j}-x_{i,l}\|>2r$ for $j\neq l$. Hence,
$$\nu\big(B(x,r)\big)\leq\frac 1{N_i}<r^s.$$
\item or $x$ does not belong to any $B(x_i,\rho+2N_i^{-s})$. In that case, we let $r=2\inf_i N_i^{-s}$ and we observe that
$$\nu\big(B(x,r)\big)=0<r^s.$$ 
\end{itemize}
This yields $\nu\in\Ulm$. Hence $\bigcup_{m>l}\Ulm$ is dense in $\pk$, which achieves the proof of Theorem \ref{THMPACKING1}.

\subsection{Existence of one multifractal measure}
We now investigate the existence of at least one measure with linear upper singularity spectrum. Our starting point is an analogue of Proposition \ref{PROPHAUSDORFF}.
\begin{lemma}\label{LEMPACKING}
 Let $E$ be a compact subset of $K$ and let $\alpha>0$. Assume that $\P^\alpha(E)<+\infty$. Then there exists $\mu\in\pk$
such that $E\subset\Eplusmualpha$.
\end{lemma}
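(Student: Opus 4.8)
The condition $x\in\Eplusmualpha$, that is $\dsuplocmux\le\alpha$, is equivalent to requiring that $\mu$ put enough mass near $x$ at \emph{every} small scale: for each $\veps>0$ there is $r_0>0$ with $\mu\big(B(x,r)\big)\ge r^{\alpha+\veps}$ for all $r<r_0$. This is far more rigid than the liminf condition treated in Proposition \ref{PROPHAUSDORFF}, and it cannot be obtained from a density theorem for the packing measure: such theorems only control $\dsuplocmux$ for $\P^\alpha$-almost every $x$, whereas here we must reach \emph{every} point of $E$. So the plan is to build $\mu$ by hand, using the finiteness of $\P^\alpha(E)$ to control the geometry of $E$ scale by scale.

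First I would use the hypothesis to decompose $E$. By the very definition of the packing measure, $\P^\alpha(E)<+\infty$ yields a countable cover $E\subset\bigcup_j E_j$ for which the packing pre-measures $P^\alpha_0(E_j)$ are all finite. Finiteness of $P^\alpha_0(E_j)$ forces a uniform bound on the number of disjoint $r$-balls centred on $E_j$ for small $r$, hence on covering numbers, so that $\dboxsup\big(\overline{E_j}\big)\le\alpha$. Replacing $E_j$ by the compact set $F_j=\overline{E_j}\cap E\subset K$, I obtain $E=\bigcup_j F_j$ with each $F_j$ compact and $\dboxsup(F_j)\le\alpha$. The point of this step is that $\P^\alpha(E)<+\infty$ gives only $\dimp(E)\le\alpha$, not a uniform box-dimension bound on $E$ itself; the decomposition is exactly what upgrades this to pieces of finite upper box dimension, on which a direct construction becomes possible.

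The heart of the argument is the construction, on a single compact set $F$ with $\dboxsup(F)\le\alpha$, of a measure $\mu_F$ with $\dsuploc(\mu_F;x)\le\alpha$ for \emph{all} $x\in F$. Choose $\veps_n\downarrow 0$ so that, for each $n$, $F$ is covered by $N_n$ balls of radius $2^{-n}$ centred on $F$ with $N_n\le 2^{(\alpha+\veps_n)n}$ (possible since $\dboxsup(F)\le\alpha$). Let $\nu_n$ be the uniform probability measure on these $N_n$ centres, and set $\mu_F=\sum_{n\ge1}w_n\nu_n$ with $w_n=c/n^2$ normalised so that $\sum_n w_n=1$. The key observation is that, since the balls of radius $2^{-n}$ cover $F$, every $x\in F$ lies within $2^{-n}$ of one of the centres, whence $\nu_n\big(B(x,2^{-n})\big)\ge 1/N_n$ and therefore
$$\mu_F\big(B(x,2^{-n})\big)\ge w_n/N_n\ge \frac{c}{n^2}\,2^{-(\alpha+\veps_n)n}.$$
Interpolating to arbitrary $r\in(2^{-n-1},2^{-n}]$ by monotonicity, taking logarithms and dividing by $\log r<0$, the factor $c/n^2$ and the vanishing $\veps_n$ contribute nothing in the limit, so one gets $\dsuploc(\mu_F;x)\le\alpha$ uniformly in $x\in F$.

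Finally I would glue: set $\mu=\sum_j 2^{-j}\mu_{F_j}\in\pk$, where $\mu_{F_j}$ is the measure just built on $F_j$. Any $x\in E$ lies in some $F_{j}$, and $\mu\big(B(x,r)\big)\ge 2^{-j}\mu_{F_{j}}\big(B(x,r)\big)$; since the factor $2^{-j}$ is constant it does not affect the limsup, so $\dsuplocmux\le\dsuploc(\mu_{F_j};x)\le\alpha$, giving $E\subset\Eplusmualpha$. The main obstacle is precisely the middle step: the naive idea of distributing mass $\approx 1/N_n$ through an equal-splitting tree fails on inhomogeneous sets, and the correct device is to superpose the uniform scale-$n$ measures over \emph{all} $n$ with slowly decaying weights, which simultaneously feeds every point of $F$ the right amount of mass at every scale.
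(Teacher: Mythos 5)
Your proof is correct, and it takes a more self-contained route than the paper. The paper's argument is three lines: since $\P^\alpha(E)<+\infty$ gives $\dimp(E)\leq\alpha$, it picks $\alpha_k\downarrow\alpha$, invokes a theorem of Tricot \cite{Tri82} providing for each $k$ a measure $\mu_k\in\pk$ with $\dsuploc(\mu_k;x)\leq\alpha_k$ for \emph{every} $x\in E$, and sets $\mu=\sum_k2^{-k}\mu_k$; the observation that a lower bound $\mu\geq 2^{-k}\mu_k$ by a fixed multiple of one summand does not change the limsup --- which you use twice, once over scales and once over the pieces $F_j$ --- then finishes the argument. What you have done is essentially reprove the needed case of Tricot's result instead of citing it: splitting a set of finite $\alpha$-packing measure into countably many pieces whose packing pre-measure, hence upper box dimension, is at most $\alpha$, and then superposing uniform measures on $2^{-n}$-coverings with slowly decaying weights, is exactly the mechanism behind the identification of $\dimp$ with the modified upper box dimension and its measure-theoretic dual. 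Your version buys independence from the external reference and reaches $\dsuploc(\mu_{F_j};x)\leq\alpha$ on each piece in one pass, the $\veps_n\downarrow0$ in the covering bound playing the role of the auxiliary sequence $\alpha_k\downarrow\alpha$ in the paper; the paper's version is shorter. Two harmless points to tidy if you write this out in full: (i) finiteness of the pre-measure $P^\alpha_0(E_j)=\lim_{\delta\to0}P^\alpha_\delta(E_j)$ yields finiteness of $P^\alpha_{\delta_0}(E_j)$ for \emph{some} $\delta_0>0$, which bounds packing (hence covering) numbers only for radii below $\delta_0$ --- enough for $\dboxsup(\overline{E_j})\leq\alpha$; (ii) depending on whether your covering balls are open or closed you may need $\nu_n\big(B(x,2^{-n+1})\big)\geq 1/N_n$ rather than radius $2^{-n}$, which changes nothing in the limit.
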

\begin{proof}
 Let $(\alpha_k)$ be a sequence of positive real numbers decreasing to $\alpha$, so that $\dimp(E)<\alpha_k$. A result of C. Tricot (\cite{Tri82})
ensures that there exists $\mu_k\in\pk$ such that, for any $x\in E$, $\dsuplocmux\leq\alpha_k$. 
We now define $\mu=\sum_k 2^{-k}\mu_k$ and we observe that, for any $x\in E$ and any $k\geq 1$, 
$$\dsuplocmux\leq\overline{\dim}_{\rm loc}(\mu_k;x)\leq \alpha_k.$$
Taking the limit, we get $E\subset\mathcal E^+(\mu;\alpha)$.
\end{proof}
\begin{proof}[Proof of Theorem \ref{THMPACKING2}]
 We start with an increasing family of sets $(E_\alpha)$, $0<\alpha<\dimp(K)$, with $\dimp(K)=\alpha$
and $\P^\alpha(E_\alpha)>0$. Let $(\alpha_k)$ be a sequence of positive real numbers dense in $(0,\dimp(K))$ and let $\mu_k$ be given 
by Lemma \ref{LEMPACKING} for $\alpha=\alpha_k$ and $E=E_{\alpha_k}$. We set $\mu=\sum_k 2^{-k}\mu_k$ and we
claim that $\dimp\big(\Eplusmualpha\big)=\alpha$ for any $\alpha\in\big(0,\dimp(K)\big)$.

Indeed, let us fix $\alpha\in\big(0,\dimp(K)\big)$ and let $(\alpha_{\phi(k)})$ be a subsequence of $(\alpha_k)$
decreasing to $\alpha$. Since $E\subset\bigcap_k E_{\alpha_{\phi(k)}}$ and since 
$$\dsuplocmux\leq\dsuploc(\mu_{\phi(k)};x)\leq\alpha_{\phi(k)}$$
for any $x\in E_{\alpha_{\phi(k)}}$, one gets $E_\alpha\subset \Eplusmualpha$. We then argue like in the proof
of Theorem \ref{THMHAUSDORFF}, splitting $E_\alpha$ into two parts:
\begin{eqnarray*}
 E_\alpha^1&=&\big\{x\in E_\alpha;\ \dsuplocmux=\alpha\big\}\\
E_\alpha^2&=&\big\{x\in E_\alpha;\ \dsuplocmux<\alpha\big\}.
\end{eqnarray*}
Since $\P^\alpha(E_\alpha^2)=0$ and $\P^\alpha(E_\alpha)>0$, one gets $\P^\alpha(E_\alpha^1)>0$, exactly
what is needed to conclude.

\medskip

Assume now that our sequence of sets $(E_\alpha)$ satisfies not only $\P^\alpha(E_\alpha)>0$, but also
$\H^\alpha(E_\alpha)>0$. In that case, we split $E_\alpha$ into
\begin{eqnarray*}
 F_\alpha^1&=&\big\{x\in E_\alpha;\ \dinflocmux\geq \alpha\big\}\\
F_\alpha^2&=&\big\{x\in E_\alpha;\ \dinflocmux<\alpha\big\}.
\end{eqnarray*}
Then $\H^\alpha(F_\alpha^1)>0$ and $\dimp(F_\alpha^1)\leq\dimp(E_\alpha)\leq \alpha$, showing
that $\dimh(F_\alpha^1)=\dimp(F_\alpha^1)=\alpha$. Moreover any $x\in F_\alpha^1$ satisfies
$$\alpha\leq\dinflocmux\leq\dsuplocmux\leq\alpha$$
which yields the second part of Theorem \ref{THMPACKING2}.

\end{proof}

\section{Applications}
\subsection{Multifractal formalism}
Let $\mu\in\pk$. The lower $L^q$-spectrum of $\mu$ is defined by 
$$\dinfmuq=\liminf_{r\to 0}\frac{\log \int_K \big(\mu(B(x,r))\big)^{q-1}d\mu(x)}{\log r}.$$
It is well-known (see \cite{BMP92,Ol95}) that the Legendre transform of $\mu$ is an upper bound for the multifractal spectrum of $\mu$: for any $\alpha\geq 0$, 
$$\dimh\big(\emoinsmualpha\big)\leq (\dinfmu)^*(\alpha)=:\inf_{q\in\RR}\big(q\alpha-\dinfmuq\big).$$
For many specific measures (like self-similar measures), this upper bound turns out to be an equality. We say that the measure satisfies the multifractal formalism at $\alpha$
if the previous equality holds for $\alpha$. The validity of the multifractal formalism is a very important issue in Physics: when it is known to be satisfied, we can estimate the singularity spectrum of real data
 through the Legendre spectrum.

\medskip

We now show that, in any compact subset of $\RR^d$, a typical measure satisfies the multifractal formalism. This extends the result of Buczolich and Seuret for
typical measures on the cube.
\begin{theorem}\label{THMFORMALISM}
Let $K$ be a compact subset of $\RR^d$. A typical measure $\mu\in\pk$ satisfies
$$\dimh\big(\emoinsmualpha\big)=(\dinfmu)^*(\alpha)\quad\quad\textrm{for any }\alpha\in[0,\dimh(K)).$$
\end{theorem}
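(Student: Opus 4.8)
The plan is to show that the theorem follows almost immediately by sandwiching $\dimh\big(\emoinsmualpha\big)$ between two inequalities, one of which is the already-established universal Legendre upper bound and the other a cheap lower bound for $(\dinfmu)^*(\alpha)$ that holds for \emph{every} measure. The only genuinely typical ingredient will be Theorem \ref{THMHAUSDORFF}; there is no new construction to perform.

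First I would record the two inequalities valid for any $\mu\in\pk$ and any $\alpha\geq 0$. The text already states $\dimh\big(\emoinsmualpha\big)\leq (\dinfmu)^*(\alpha)$. The complementary bound $(\dinfmu)^*(\alpha)\leq\alpha$ is obtained by evaluating the infimum defining the Legendre transform at the single value $q=1$. Indeed, $\int_K \big(\mu(B(x,r))\big)^{1-1}d\mu(x)=\mu(K)=1$, so $\dinfmu(1)=\liminf_{r\to 0}\frac{\log 1}{\log r}=0$, whence $(\dinfmu)^*(\alpha)\leq 1\cdot\alpha-\dinfmu(1)=\alpha$. It is worth emphasizing that both of these inequalities are completely independent of the choice of $\mu$.

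Second I would invoke Theorem \ref{THMHAUSDORFF}: for a typical $\mu\in\pk$ we have $\dimh\big(\emoinsmualpha\big)=\alpha$ for every $\alpha\in[0,\dimh(K))$. Fixing such a typical $\mu$ and such an $\alpha$, and chaining the three inequalities, gives
$$\alpha=\dimh\big(\emoinsmualpha\big)\leq (\dinfmu)^*(\alpha)\leq\alpha,$$
so every term equals $\alpha$; in particular $\dimh\big(\emoinsmualpha\big)=(\dinfmu)^*(\alpha)$, which is exactly the assertion. The residual set on which the conclusion holds is precisely the one furnished by Theorem \ref{THMHAUSDORFF}, with no further intersection needed.

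I do not expect any real obstacle here: the entire content lies in recognizing that the reverse inequality $(\dinfmu)^*(\alpha)\leq\alpha$ comes for free from the choice $q=1$, and that the typicality is wholly inherited from the lower singularity spectrum computation already carried out. The one point deserving a line of care is simply to confirm that the universal Legendre upper bound and the value $\dinfmu(1)=0$ hold with no regularity hypothesis on $\mu$, so that the sandwich is legitimate on the full residual set.
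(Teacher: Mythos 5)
Your proof is correct, and it takes a genuinely lighter route than the paper for the key step. Both arguments rest on the universal Legendre upper bound $\dimh\big(\emoinsmualpha\big)\leq (\dinfmu)^*(\alpha)$ and on Theorem \ref{THMHAUSDORFF}; where you differ is in how the reverse inequality is obtained. The paper invokes the results of \cite{BAYLQ} and \cite{Ol05} to identify the full typical lower $L^q$-spectrum ($\dinfmuq=0$ for $q\geq 1$, $-s(1-q)$ for $q\in[0,1)$, $-\infty$ for $q<0$, with $s=\dboxsup(K)$) and then computes its Legendre transform to get $(\dinfmu)^*(\alpha)=\alpha$ on $[0,s]$; this requires intersecting the residual set of Theorem \ref{THMHAUSDORFF} with a second residual set, plus the observation that $\dimh(K)\leq s$. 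You instead note that $\dinfmu(1)=0$ for \emph{every} $\mu\in\pk$ (since $\int_K\big(\mu(B(x,r))\big)^{0}\,d\mu=1$, the integrand being well defined $\mu$-a.e.\ because $\mu(B(x,r))>0$ on $\supp\mu$), so that taking $q=1$ in the infimum gives $(\dinfmu)^*(\alpha)\leq\alpha$ unconditionally, and the sandwich $\alpha=\dimh\big(\emoinsmualpha\big)\leq(\dinfmu)^*(\alpha)\leq\alpha$ closes the argument on the single residual set of Theorem \ref{THMHAUSDORFF}. What the paper's heavier route buys is extra information of independent interest — the explicit value of the typical $L^q$-spectrum and the identity $(\dinfmu)^*(\alpha)=\alpha$ on the possibly larger interval $[0,\dboxsup(K)]$ — but for the theorem as stated your argument is complete and needs no external input beyond what the paper has already asserted.
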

\begin{proof}
By \cite{BAYLQ} and \cite{Ol05}, we know that a typical measure $\mu\in\pk$ satisfies
$$\dinfmuq=\begin{cases}
0&\textrm{if }q\geq 1\\
-s(1-q)&\textrm{if }q\in[0,1)\\
-\infty&\textrm{if }q<0,
\end{cases}$$
where $s$ is the upper-box dimension of $K$. Then it is easy to check that, for any $\alpha\in[0,s]$, $(\dinfmu)^*(\alpha)=\alpha$. 
We conclude by comparing this with the statement of Theorem \ref{THMHAUSDORFF} (recall that $\dimh(K)\leq s)$. 
\end{proof}

\subsection{Prevalence}
In this paper, we are mainly concerned with genericity in the sense of Baire theorem. We can also consider other notions of genericity.
In particular, genericity in the sense of prevalence has been proved to be relevant in multifractal analysis. Prevalence is an infinite-dimensional version of ``almost-everywhere". It has been defined by Anderson and Zame \cite{AZ01} in the context of convex subsets of vector spaces (see also \cite{OY05}).

Let $X$ be a topological vector space and let $C\subset X$ be a convex subset of $X$ which is completely metrizable in the relative topology induced from $X$. 
Fix a Borel set $E\subset C$ and $c\in C$.  We say that $E$ is \emph{shy} in $C$ at $c$ if, for any $\eta>0$ and any neighbourhood $U$ of $0$, 
there is a Borel probability measure $\Lambda$ on $X$ such that
\begin{enumerate}
\item $\textrm{supp}(\Lambda)$ is compact and $\supp(\Lambda)\subset U+c$;
\item $\supp(\Lambda)\subset \eta C+(1-\eta)c$;
\item for all $x\in E$, we have $\Lambda(x+E)=0$.
\end{enumerate}
A Borel subset of $C$ is said \emph{prevalent} if its complement (in $C$) is shy in $C$ at some point $c\in C$.

\medskip

The behaviour of the multifractal spectrum for a prevalent measure was first studied by Olsen in \cite{Ol10b}. He shows that, for any $\alpha>0$, a prevalent
measure $\mu\in\pk$ satisfies
\begin{eqnarray*}
\dimh\big(\Emoinsmualpha\big)&=&\sup_{\nu\in\pk}\dimh\big(\E_-(\nu;\alpha)\big)\\
\dimp\big(\Eplusmualpha\big)&=&\sup_{\nu\in\pk}\dimp\big(\E_+(\nu;\alpha)\big).
\end{eqnarray*}
When $K$ is a self-similar compact set satisfying the Open Set Condition, he concludes that for any $\alpha\in[0,\dimh(K)]$, a prevalent measure
$\mu\in\pk$ satisfies $\dimh\big(\Emoinsmualpha\big)=\dimp\big(\Eplusmualpha\big)=\alpha$. 

\smallskip

An application of Proposition \ref{PROPHAUSDORFF} shows that the assumption ``$K$ is a self-similar compact set satisfying the Open
Set Condition" is not needed at least to prove that, for any $\alpha\in[0,\dimh(K))$, a prevalent measure $\mu\in\pk$
satisfies $\dimh\big(\Emoinsmualpha\big)=\alpha$. We shall go much further by showing that a prevalent measure has a linear lower singularity spectrum.
When the compact set is regular, we can strenghten this statement: a prevalent measure has a linear singularity spectrum. Hence genericity in the Baire sense
and in the prevalence sense give very different results.
\begin{theorem}
Let $K$ be a compact subset of $\mathbb R^d$. Then a prevalent measure $\mu\in\pk$ satisfies, for any $\alpha\in[0,\dimh(K))$, 
$$\dimh\big(\emoinsmualpha\big)=\alpha.$$
If $K$ is a self-similar compact set satisfying the Open Set Condition, then 
a prevalent measure $\mu\in\pk$ satisfies, for any $\alpha\in[0,\dimh(K))$, 
$$\dimh(\emualpha)=\dimp\big(\emualpha\big)=\alpha.$$
\end{theorem}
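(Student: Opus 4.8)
The plan is to follow the two-step scheme of Theorems \ref{THMHAUSDORFF} and \ref{THMPACKING2} (first arrange that $E_\alpha\subset\E_-(\mu;\alpha)$, and in the self-similar case also $E_\alpha\subset\E^+(\mu;\alpha)$, then split $E_\alpha$ according to the value of $\dinflocmux$), but to replace the Baire step by a prevalence argument. Everything will rest on a monotonicity principle that I would isolate first: if $\sigma\in\pk$ and $\mu\geq\veps\sigma$ for some $\veps>0$, then $\dinflocmux\leq\dinfloc(\sigma;x)$ and $\dsuplocmux\leq\dsuploc(\sigma;x)$ for every $x$, since $\mu(B(x,r))\geq\veps\sigma(B(x,r))$ gives $\frac{\log\mu(B(x,r))}{\log r}\leq\frac{\log\veps}{\log r}+\frac{\log\sigma(B(x,r))}{\log r}$ and $\log\veps/\log r\to0$. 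Applied to a mixture $\mu=(1-\veps)\nu+\veps\sigma\geq\veps\sigma$, this means that any one-sided upper bound on the local dimensions of $\sigma$ on a set $E$ is inherited by $\mu$, \emph{for every} $\nu\in\pk$ and every $\veps\in(0,1]$.

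For a general compact $K$, set $s=\dimh(K)$, take the increasing Besicovitch family $(E_\alpha)_{0<\alpha<s}$ of Theorem \ref{THMBESICO} with $0<\H^\alpha(E_\alpha)<+\infty$, and fix a dense sequence $(\alpha_k)$ in $(0,s)$. For each $k$ I would revisit the construction inside the proof of Proposition \ref{PROPHAUSDORFF} for a sequence of exponents $\beta\downarrow\alpha_k$ and superpose the resulting measures, obtaining a single $\sigma_k\in\pk$ with $E_{\alpha_k}\subset\E_-(\sigma_k;\alpha_k)$. Writing $G_k=\{\mu\in\pk;\ E_{\alpha_k}\subset\E_-(\mu;\alpha_k)\}$, the monotonicity principle gives $(1-\veps)\nu+\veps\sigma_k\in G_k$ for all $\nu\in\pk$, $\veps\in(0,1]$, so $\pk\setminus G_k$ meets each segment $\{(1-t)\nu+t\sigma_k;\ t\in(0,1]\}$ at most at $t=0$. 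To turn this into prevalence I would fix a base point $c$ and take the transverse measure $\Lambda$ to be the image of normalised Lebesgue measure on $[0,\eta]$ under $t\mapsto(1-t)c+t\sigma_k$; this segment lies in $(1-\eta)c+\eta\pk$, and in any prescribed neighbourhood of $c$ once $\eta$ is small, so the support conditions hold, while the nullity condition reduces, via the monotonicity principle, to the fact that the probe leaves $G_k$ only on a Lebesgue-negligible set of parameters. As prevalence is stable under countable intersection, $G=\bigcap_k G_k$ is prevalent. Given $\mu\in G$ and $\alpha\in(0,s)$, choosing $\alpha_{\phi(j)}\downarrow\alpha$ and using that the family is increasing yields $E_\alpha\subset E_{\alpha_{\phi(j)}}\subset\E_-(\mu;\alpha_{\phi(j)})$, hence $E_\alpha\subset\E_-(\mu;\alpha)$. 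Splitting $E_\alpha$ as in Theorem \ref{THMHAUSDORFF}, the part $\{x\in E_\alpha;\ \dinflocmux<\alpha\}$ lies in a countable union of sets $\E_-(\mu;\beta)$ with $\beta<\alpha$, each of $\H^\alpha$-measure $0$ because $\dimh\big(\E_-(\mu;\beta)\big)\leq\beta<\alpha$; since $\H^\alpha(E_\alpha)>0$, the complementary part $\{x\in E_\alpha;\ \dinflocmux=\alpha\}\subset\emoinsmualpha$ has positive $\H^\alpha$-measure and dimension at least $\alpha$. As $\dimh(\emoinsmualpha)\leq\alpha$ always, this gives $\dimh(\emoinsmualpha)=\alpha$ (the value $\alpha=0$ being trivial), proving the first assertion.

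For the self-similar case I would replace $(E_\alpha)$ by the increasing family constructed in Subsection 3.2, which in addition satisfies $\dimp(E_\alpha)\leq\alpha$ (recall $s=\dimh(K)=\dimp(K)$). Since $\dimp(E_{\alpha_k})\leq\alpha_k$ forces $\P^\beta(E_{\alpha_k})=0<+\infty$ for $\beta>\alpha_k$, Lemma \ref{LEMPACKING} provides $\tau_k\in\pk$ with $E_{\alpha_k}\subset\E^+(\tau_k;\alpha_k)$. Superposing the two probes, $\sigma_k^\ast=\frac12(\sigma_k+\tau_k)$, the monotonicity principle shows that mixing with $\sigma_k^\ast$ enforces simultaneously $E_{\alpha_k}\subset\E_-(\mu;\alpha_k)$ and $E_{\alpha_k}\subset\E^+(\mu;\alpha_k)$, so the same probe argument makes $\widetilde G_k=\{\mu\in\pk;\ E_{\alpha_k}\subset\E_-(\mu;\alpha_k)\cap\E^+(\mu;\alpha_k)\}$ prevalent, and hence so is $\widetilde G=\bigcap_k\widetilde G_k$. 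For $\mu\in\widetilde G$ and $\alpha\in(0,s)$ one obtains, as above, $E_\alpha\subset\E_-(\mu;\alpha)\cap\E^+(\mu;\alpha)$; on $F=\{x\in E_\alpha;\ \dinflocmux=\alpha\}$, which still has $\H^\alpha(F)>0$, every point satisfies $\alpha\leq\dinflocmux\leq\dsuplocmux\leq\alpha$, so $F\subset\emualpha$. Then $\dimh(\emualpha)\geq\dimh(F)=\alpha$, while $\emualpha\subset\E^+(\mu;\alpha)$ forces $\dimp(\emualpha)\leq\alpha$, whence $\dimh(\emualpha)=\dimp(\emualpha)=\alpha$.

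The hard part will be the prevalence step itself: verifying that the one-dimensional transverse measure along $t\mapsto(1-t)c+t\sigma_k$ (resp. $t\mapsto(1-t)c+t\sigma_k^\ast$) genuinely satisfies the Anderson--Zame shyness conditions, i.e.\ that it annihilates every admissible translate of $\pk\setminus G_k$. The monotonicity principle is precisely what should reduce this to a negligibility statement in the parameter $t$, and it is this reduction---rather than the splitting arguments, which merely transcribe the proofs of Theorems \ref{THMHAUSDORFF} and \ref{THMPACKING2}---that carries the real content of the theorem in the prevalence setting.
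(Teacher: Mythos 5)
Your architecture is the paper's: a one-dimensional probe running from a Dirac mass towards a suitably multifractal measure, with the monotonicity of local dimensions under domination ($\mu\geq\veps\sigma$ implies $\dinfloc(\mu;x)\leq\dinfloc(\sigma;x)$) doing all the work. (The paper uses a single probe measure $\Theta$ supplied by Theorem \ref{THMHAUSDORFF} and one shyness argument for the set of all bad measures, rather than one probe per level $\alpha_k$ followed by a countable intersection of prevalent sets; that difference is cosmetic.) The step you defer to your last paragraph, however, is exactly where the content lies, and the reduction you sketch for it is not the right statement. The Anderson--Zame condition requires $\Lambda\big(\mu+(\pk\setminus G_k)\big)=0$ for \emph{translates} by measures $\mu$, i.e.\ you must control the set of parameters $t$ for which $\nu_t:=(1-t)c+t\sigma_k-\mu$ lies in $\pk\setminus G_k$. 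Your middle paragraph instead controls the mixtures $(1-t)\nu+t\sigma_k$ with $\nu\in\pk$, and speaks of ``the probe leaving $G_k$''; but $\nu_t$ is not of that form and in particular need not dominate $t\sigma_k$ (you are subtracting a positive measure $\mu$), so the monotonicity principle does not apply to it directly.

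The gap is closed by a subtraction argument, and it forces a specific choice of base point. If $t_1<t_2$ both give $\nu_{t_1},\nu_{t_2}\in\pk\setminus G_k$, then $\nu_{t_2}-\nu_{t_1}=(t_2-t_1)(\sigma_k-c)$, and since $\nu_{t_1}\geq 0$ this yields $\nu_{t_2}+(t_2-t_1)c\geq(t_2-t_1)\sigma_k$. To feed this into your monotonicity principle the term $(t_2-t_1)c$ must be invisible near the points of $E_{\alpha_k}$: taking $c=\delta_a$ a Dirac mass (as the paper does) gives $\nu_{t_2}\big(B(x,r)\big)\geq(t_2-t_1)\sigma_k\big(B(x,r)\big)$ for every $x\neq a$ and all small $r$, hence $E_{\alpha_k}\setminus\{a\}\subset\E_-(\nu_{t_2};\alpha_k)$, so at most one parameter $t$ can be bad for each translate --- certainly a Lebesgue-null set. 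Two consequences you should record: the ``base point $c$'' cannot be arbitrary (for a diffuse $c$ the lower bound on $\nu_{t_2}\big(B(x,r)\big)$ may be ruined by $c\big(B(x,r)\big)$); and the exceptional point $a$ means you must either drop $a$ from the definition of $G_k$ or, as the paper does, define the bad set directly by the dimension condition $\dimh\big(E_-(\mu;\alpha)\big)<\alpha$, which is insensitive to a single point since $\alpha>0$. With these repairs your argument is complete, for both the general and the self-similar statements, and coincides with the paper's proof.
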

\begin{proof}
We shall see that the existence of one measure with a linear multifractal spectrum implies that a prevalent measure shares the same property. 
We just prove the first statement, the second one being completely similar. We fix $a\in K$ and let 
$$M=\left\{\mu\in\pk;\ \exists \alpha\in\big(0,\dimh(K)\big), \dimh\big(\emoinsmualpha\big)<\alpha\right\}.$$
We show that $M$ is a shy in $\pk$ at $\delta_a$. Namely, given $\eta>0$ and $U$ a neighbourhood of $0$ in $\M(K)$, we have to construct a Borel
probability measure $\Lambda$ on $\mathcal M(K)$ satisfying the following conditions:
\begin{enumerate}
\item $\supp(\Lambda)$ is compact and $\supp(\Lambda)\subset U+\delta_a$;
\item $\supp(\Lambda)\subset \eta\pk+(1-\eta)\delta_a$;
\item for all $\mu\in\mathcal M(K)$, $\Lambda(\mu+M)=0$.
\end{enumerate}
Let $\Theta$ be a multifractal probability measure on $K$ given by Theorem \ref{THMHAUSDORFF}. Moreover, we ask that for any
$\alpha\in(0,\dimh(K))$, $\H^\alpha\big(E(\Theta;\alpha)\big)>0$ (see the proof of this theorem). Let $\rho>0$ be very small and let
$T:[0,\rho]\to \pk$ be defined by
$$T(t)=t\Theta+(1-t)\delta_a.$$
The probability measure $\Lambda$ on $\mathcal M(K)$ is defined by
$$\Lambda=\frac1\rho\mathcal L\circ T^{-1},$$
where $\mathcal L$ is the $1$-dimensional Lebesgue measure on $\mathbb R$. Provided $\rho$ is sufficiently small, (1) and (2) are clearly satisfied.
We now show (3) and even a much stronger property: for any $\mu\in\M(K)$, there exists at most one $t\in[0,\rho]$ such that 
$T(t)\in\mu+M$. Suppose that the contrary holds, and let $t<s$, $\nu_1,\nu_2\in M$ be such that
$$\mu+\nu_1=t\Theta+(1-t)\delta_a,\quad \mu+\nu_2=s\Theta+(1-s)\delta_a.$$
Substracting these equalities, we find
$$(s-t)\Theta=\nu_2-\nu_1+(s-t)\delta_a\leq \nu_2+(s-t)\delta_a.$$
This implies that, for any $x\in K\backslash\{a\}$, 
$$\dinfloc(\nu_2;x)\geq \dinfloc(\Theta;x).$$
Thus, for any $\alpha\in\big(0,\dimh(K)\big)$, 
$$E_-(\Theta;\alpha)\subset \E_-(\nu_2;\alpha)\cup\{a\}.$$
This in turn yields
$$\H^\alpha\big(\E_-(\nu_2;\alpha)\big)>0.$$
We argue like in the proof of Theorem \ref{THMHAUSDORFF} to conclude that $\H^\alpha\big(E_-(\nu_2;\alpha)\big)>0$ for any
$\alpha\in\big(0,\dimh(K)\big)$, a contradiction with $\nu_2\in M$.
\end{proof}


\providecommand{\bysame}{\leavevmode\hbox to3em{\hrulefill}\thinspace}
\providecommand{\MR}{\relax\ifhmode\unskip\space\fi MR }
\providecommand{\MRhref}[2]{%
  \href{http://www.ams.org/mathscinet-getitem?mr=#1}{#2}
}
\providecommand{\href}[2]{#2}

\end{document}